\documentclass[a4paper,11pt]{article}
\usepackage{amsmath,amsthm,amssymb,amsfonts}
\usepackage[svgnames,x11names,table]{xcolor}
\usepackage[utf8]{inputenc}
\usepackage{mathtools}
\usepackage{enumitem}

\usepackage{graphicx}
\usepackage{geometry}
\usepackage{framed}  
\usepackage[ruled,vlined]{algorithm2e}
\usepackage{booktabs}
\usepackage{cite}
\usepackage{hyperref} 
\hypersetup{
        colorlinks=true,   
        linkcolor=blue,    
        citecolor=green,    
        urlcolor=cyan      
      }
      
\usepackage{tikz}
\usetikzlibrary{positioning}

\usepackage{bbm}

\usepackage{xfrac}
\usepackage{microtype}
\renewcommand{\leq}{\leqslant}
\renewcommand{\geq}{\geqslant}
\renewcommand{\emptyset}{\varnothing}

\newtheorem{theorem}{Theorem}[section]
\newtheorem{lemma}[theorem]{Lemma}
\newtheorem{proposition}[theorem]{Proposition}
\newtheorem{corollary}[theorem]{Corollary}
\newtheorem{definition}[theorem]{Definition}
\newtheorem{assumption}[theorem]{Assumption}
\newtheorem{remark}[theorem]{Remark}
\newtheorem{example}[theorem]{Example}
\newtheorem{problem}[theorem]{Problem}

\newcommand{\R}{\mathbbm R}
\newcommand{\Rext}{(-\infty,+\infty]}
\newcommand{\N}{\mathbbm N}

\DeclareMathOperator{\prox}{prox}

\DeclareMathOperator{\dom}{dom}
\DeclareMathOperator{\gra}{gra}

\DeclareMathOperator{\range}{range}

\DeclareMathOperator{\ri}{\mathrm{ri}}

\newcommand{\Hilbert}{\mathcal{H}}
\newcommand{\G}{\mathcal{G}}

\newcommand{\setto}{\rightrightarrows}

\DeclareMathOperator{\Id}{Id}
\DeclareMathOperator*{\argmin}{arg\,min}

\newcommand{\bz}{\mathbf{z}}
\newcommand{\bx}{\mathbf{x}}
\newcommand{\by}{\mathbf{y}}
\newcommand{\bv}{\mathbf{v}}
\newcommand{\bu}{\mathbf{u}}
\newcommand{\bs}{\mathbf{s}}
\newcommand{\bw}{\mathbf{w}}

\newcommand{\hbx}{\widehat{\bx}}
\newcommand{\hby}{\widehat{\by}}

\newcommand{\cE}{\mathcal{E}}
\newcommand{\cG}{\mathcal{G}}
\newcommand{\cV}{\mathcal{V}}
\newcommand{\cR}{\mathcal{R}}

\usepackage{accents}
\newcommand\munderbar[1]{%
	\underaccent{\bar}{#1}}

\title{Primal-dual forward-reflected splitting with linesearch for distributed min-max problems under local Lipschitz smoothness}

\author{Felipe Atenas\thanks{Centro de Modelamiento Matem\'atico (CNRS IRL2807), Universidad de Chile, Santiago, \textsc{Chile}. 
		Email:~\href{mailto:fatenas@cmm.uchile.cl}
		{fatenas@cmm.uchile.cl}}
	\and
	Minh N. Dao\thanks{School of Science, RMIT University, Melbourne,
		                             \textsc{Australia}.
		Email:~\href{mailto:minh.dao@rmit.edu.au}
		{minh.dao@rmit.edu.au}}
	\and 
	Matthew K. Tam\thanks{School of Mathematics \& Statistics,
		The University of Melbourne,
		\textsc{Australia}.
		Email:~\href{mailto:matthew.tam@unimelb.edu.au}
		{matthew.tam@unimelb.edu.au}}
	\and
	David Torregrosa-Bel\'en\thanks{Department of Mathematics,
		University of Alicante,
		Alicante, \textsc{Spain}.
		Email:~\href{mailto:david.torregrosa@ua.es}
		{david.torregrosa@ua.es}}
}

\makeatletter
\renewcommand{\maketitle}{
	\begin{center}
		{\LARGE\bfseries\@title\par}
		\vskip 2em
		{\large
			\lineskip .5em
			\begin{tabular}[t]{c}
				\@author
			\end{tabular}\par}
	\end{center}
	\@thanks
}
\makeatother

\begin{document}
\maketitle

\begin{abstract}
We consider a network of finitely many agents aiming to cooperatively solve a convex-concave min-max problem combining nonsmooth terms with a smooth saddle function whose partial gradients are only locally Lipschitz continuous. Without requiring prior  knowledge of the local Lipschitz moduli of the partial gradients, we propose a distributed proximal-reflected-gradient algorithm which incorporates a distributed backtracking procedure to adjust the stepsizes at each iteration. The algorithm is derived by reformulating the min-max problem as a composite monotone inclusion involving the sum of a maximally monotone operator, a linearly composed maximally monotone operator, and a single-valued locally Lipschitz continuous monotone operator. To solve this inclusion, we introduce a novel primal-dual forward-reflected splitting method with linesearch and establish its convergence using a Lyapunov-based analysis. In addition, when specialised to the min-max formulation, we derive an $\mathcal{O}(1/N)$ ergodic convergence rate for a consensus-penalised gap function. Finally, applying this splitting framework with mixing matrices yields our distributed method for consensus min-max optimisation.
\end{abstract}

\paragraph{Keywords} distributed consensus optimisation, min-max problems, proximal algorithm, primal-dual splitting algorithm, backtracking linesearch
\paragraph{MSC2020}  68W15, 90C25, 	90C47,  47N10, 65K05

\section{Introduction}
In this note, we devise a first-order distributed algorithm for the following \emph{min-max problem}.

\begin{problem}\label{problem:1}
For all $i\,{\in}\,{\{1,2,\ldots,n\}}$, let the functions $f_i:\R^p\to{(-\infty,+\infty]}$ and $g_i:\R^q\to{(-\infty,+\infty]}$ be proper lower semicontinuous convex functions, and $\phi_i:\R^p\times\R^q\to\R$ be differentiable functions whose  partial gradients $\nabla_x\phi_i(\cdot,\cdot)$ and $\nabla_y\phi_i(\cdot,\cdot)$ are locally Lipschitz continuous on $\overline{\dom} \,f_i \times \overline\dom\,g_i$, and such that $\phi_i(\cdot,y)$ is convex on ${\dom}\, f_i$ for every $y\in {\dom}\, g_i$, and $\phi_i(x,\cdot)$ is concave on ${\dom}\, g_i$ for every $x\in {\dom}\, f_i$.
Find a solution of the min-max problem
\begin{equation}\label{eq:min-max}
	\min_{x\in\R^p}\max_{y\in\R^q} \;  \Psi(x,y):= \sum_{i=1}^n \bigl( f_i(x) + \phi_i(x,y)-g_i(y) \bigr),
\end{equation}
where solutions to~\eqref{eq:min-max} are understood in the sense of \emph{saddle-points}. Specifically, a point $(x^*,y^*)$ is said to be a saddle point of~\eqref{eq:min-max} if
\[
\Psi(x^*,y) \leq \Psi(x^*,y^*) \leq \Psi(x,y^*), \quad \forall x\in\R^p, \, \forall y \in\R^q.
\]
\end{problem}
For further details on min-max problems and sufficient conditions for the existence of saddle-points, we refer the interested reader to~\cite{MR274683}.

Works focused on designing decentralised schemes for saddle-point problems often address particular instances of Problem~\ref{problem:1} or require additional assumptions. Indeed, the authors in~\cite{9304470,NEURIPS2022_c959bb2c} make use of strong monotonicity assumptions, while in~\cite{MR4979666,MR4979661,MR4979664} the functions $f_i$ and $g_i$ are restricted to indicator functions of compact convex sets.
More recently, a distributed algorithm for min-max problems in the form of~\eqref{eq:min-max} was proposed by Malitsky and Tam~\cite{malitsky2023first} under the more restrictive assumption of the partial gradients of the functions $\phi_i$ being globally Lipschitz continuous. Following this work, we assume that a network of $n$ agents works cooperatively to solve Problem~\ref{problem:1} under the setup: (i) each agent $i$ computes the gradients of $\phi_i$ along with the proximity operators of $f_i$ and $g_i$, and (ii) agents exchange information via  their immediate neighbours in the network. Departing from~\cite{malitsky2023first}, our primary theoretical innovation lies in addressing the lack of global Lipschitz continuity in the gradients of $\phi_i$. Specifically, we assume: (iii) there exists no knowledge of the (local) Lipschitz constants of the gradients $\nabla_x\phi_i(\cdot,\cdot)$ and $\nabla_y \phi_i(\cdot,\cdot)$. Problems satisfying conditions (i)-(iii) arise, for instance, in decentralised approaches to distributionally robust optimisation~\cite{MR2486041,MR4379565} (see Section~\ref{sect:example} below for a concrete example illustrating this).

Algorithms belonging to the PG-EXTRA family~\cite{MR3343366,shi2015proximal} represent some of the most prominent schemes in distributed optimisation. It is well established that PG-EXTRA can be derived as a specific instance of a primal–dual splitting algorithm---namely, the Condat--V\~u algorithm~\cite{MR3037034,MR3084386}---applied to an appropriate monotone operator inclusion; see, e.g.,~\cite{malitsky2023first,MR4496334}. In this fashion, we derive our distributed algorithm by first reformulating the problem as a composite monotone inclusion problem of the following form.
\begin{problem}\label{problem:mi}
Let $\Hilbert$ and $\G$ be finite-dimensional real Hilbert spaces. Consider the inclusion problem given by
\begin{equation}\label{eq:monotone_inclusion}
	\text{find } z\in\Hilbert \text{ such that }  0\in\bigl(A+B\bigr)(z) + K^*C(Kz),
\end{equation}
where $A:\Hilbert\setto\Hilbert$, $C:\G\setto\G$ are maximally monotone operators, $B:\Hilbert\to\Hilbert$ is monotone 
on ${\dom}\, A$ and locally Lipschitz continuous on ${\overline{\dom}}\, A$, $K:\Hilbert\to\G$ is a  linear operator, and $K^*:\G\to\Hilbert$ its adjoint operator. 	
\end{problem}

It is well-known that dealing with monotone Lipschitz continuous operators requires considering different numerical schemes than the ones used for cocoercive operators; see, e.g.,~\cite{MR4666868,MR4101375,MR1741147,MR4093628}. In addition,  when the operator fails to be globally Lipschitz continuous or the Lipschitz constant cannot be accurately estimated, further efforts are required to select appropriate stepsizes; see, e.g.,~\cite{MR4101375,MR3860903,MR4490614}. 
Different splitting algorithms with linesearch have been proposed to handle monotone inclusions given by the sum of a  maximally monotone operator and a locally Lipschitz continuous  monotone operator; see, e.g.,~\cite{MR1741147,MR4101375,MR4490614,MR3860903}. These methods can solve Problem~\ref{problem:mi} by reformulating~\eqref{eq:monotone_inclusion} as the two-operator primal-dual inclusion
\[
\text{find } \begin{bmatrix} u \\ z
\end{bmatrix}\in\G\times\Hilbert \text{ such that } 0\in (\mathcal{A} + \mathcal{B})\begin{bmatrix} u \\ z
\end{bmatrix},
\]
where 
\[
\mathcal{A}:= \left[
\begin{array}{cc}
C^{-1} & 0 \\
0 & A
\end{array}
\right]
\quad
\text{and}
\quad
\mathcal{B}:=
\left[
\begin{array}{cc}
0 &  -K\\
K^* & B
\end{array}
\right].
\]
Specifically, they resort to forward-backward schemes that perform direct evaluations of the operator $\mathcal{B}$ and the resolvent of the set-valued operator $\mathcal{A}$. To illustrate this, we consider the \emph{forward-reflected-backward method with linesearch} in~\cite{MR4101375} that defines the iterates as
\[
\omega^{k+1} = J_{\gamma_k \mathcal{A}} \bigl(\omega^k - \gamma_k \mathcal{B}(\omega^k)- \gamma_{k-1} (\mathcal{B}(\omega^k)-\mathcal{B}(\omega^{k-1}) \bigr),
\]
with $\omega^{k}=(u^k,z^k)\in\cG\times\Hilbert$ for all $k\geq -1$, and where the stepsize $\gamma_k$ is taken as the largest number in $\{\lambda_{k-1},\rho\lambda_{k-1},\rho^2\lambda_{k-2},\ldots\}$ such that
\begin{equation}\label{eq:intro-lscond}
\gamma_k\|\mathcal{B}(\omega^{k+1})-\mathcal{B}(\omega^k)\| \leq \frac{\beta}{2}\|\omega^{k+1}-\omega^k\|,
\end{equation}
for $\rho,\beta\in(0,1)$. Since the linesearch condition~\eqref{eq:intro-lscond} requires the computation of  $\mathcal{B}(\omega^{k+1})$, every linesearch trial evaluates $B$, $K$, $K^*$ and the resolvents of both $A$ and $C^{-1}$. In this manner, an accepted linesearch iteration uses two evaluations of $B$, two of $K$ and two of $K^*$.

In this work, we develop a new \emph{primal-dual forward-reflected algorithm  with linesearch} (PDFRL) tailored to~Problem~\ref{problem:mi}  that alleviates the computational burden of the linesearch with respect to the aforementioned approaches. Specifically, PDFRL generates a pair of sequences converging to a primal-dual solution of Problem~\ref{problem:mi} by evaluating the resolvents of $A$ and $C^{-1}$ separately. The lack of cocoercivity of $B$ is resolved by incorporating reflected-type terms of this operator. Notably, PDFLR's linesearch only involves the sequence of primal variables generated by the algorithm, thereby it does not require  additional evaluations of the resolvent of $C^{-1}$. Furthermore, only one call to $K$ and $K^*$ are needed per iteration, independently of the number of linesearch trials performed. 
This, in turn, will permit us to derive a distributed algorithm with a single round of communication per iteration by specialising PDFRL to Problem~\ref{problem:1}.

It is worth noting that when the monotone operator $B$ is assumed to be globally Lipschitz, a fixed stepsize can be taken for all the iterations of our method, and no linesearch is needed. In such a case, our scheme specialises to the \emph{primal-dual twice reflected} algorithm  in~\cite{malitsky2023first} with extrapolation performed on the dual variable.  As explained in~\cite{malitsky2023first} (see also Remark~\ref{remark:particularcases} below), this method admits a representation as an instance of the forward-backward-reflected splitting~\cite{MR4101375} under a different metric, which is used to establish its convergence. While this suggests that it may be possible to analyze the convergence of PDFRL with linesearch using a similar (variable) metric reformulation, it is unclear whether this would lead to a simpler presentation of the result. We therefore establish convergence to a primal-dual solution using a direct Lyapunov-based analysis. This also allows us to obtain ergodic convergence rates of $\mathcal{O}(1/N)$, where $N$ is the number of iterations of the algorithm, for a  consensus-penalised gap function associated to Problem~\ref{problem:1} (see~\eqref{eq:gapfunction} below).

 We derive our proposed \emph{distributed proximal-reflected-gradient method with linesearch} (DPRGL) for solving Problem~\ref{problem:1}, by appropriately applying PDFRL using mixing matrices (see Definition~\ref{def:mm} below). In particular, devising DPRGL required tailoring the linesearch step in PDFRL to agent-wise operations. Recently,~\cite{atenas2024distributed} and~\cite{chen2025parameter}  have proposed distributed linesearch mechanisms for proximal-gradient methods for minimisation problems. Here,  the framework developed in~\cite{atenas2024distributed} is extended to the min-max setting to equip our algorithm with a backtracking linesearch that relies on a global scalar reduction across all agents.

The remainder of this work is organised as follows. Section~\ref{sect:preliminaries} introduces some preliminary notation. Section~\ref{sect:example} presents  examples of  applications of Problem~\ref{problem:1} in  two regression models.  In Section~\ref{sect:rmi}, we present the monotone inclusion reformulation of Problem~\ref{problem:1}, which will be crucial for the design of our distributed algorithm.
Subsequently, in Section~\ref{sect:PDFR}, we develop a new primal-dual forward reflected algorithm with linesearch for solving Problem~\ref{problem:mi} and analyse its convergence. This method is leveraged to obtain a distributed algorithm for Problem~\ref{problem:1} in Section~\ref{sect:distalgorithm}. Finally, conclusions and future lines of research are discussed in Section~\ref{sect:conclusion}.

\section{Notation and definitions}\label{sect:preliminaries}

Throughout this work, we use $\Hilbert$ and $\G$ to denote finite-dimensional real \emph{Hilbert spaces}. Unless otherwise stated, and to simplify the notation, we will employ $\langle \cdot, \cdot \rangle$ and $\|\cdot\|$ to denote the \emph{inner product} and the \emph{induced norm}, respectively, of any space. In most of the cases, $\Hilbert$ will just stand for the Euclidean space, which is denoted as $\R^p$ for a given dimension $p$, with norm given by the standard dot product. The set of natural numbers is $\N:=\{0,1,2,\ldots\}$. For a sequence in $\Hilbert$ we use $(z^k)$ instead of the more lengthy $(z^k)_{k\in\N}$. We use $\to$ for denoting convergence of a sequence,  $\ri D$ for the \emph{relative interior} of a set $D$, and establish $1/0=+\infty$ by convention.

When addressing Problem~\ref{problem:1}, it will be convenient to decouple the variables and work on the product space
\[
\mathcal{H}^n:=\bigl\{ \mathbf{x}=(x_1,x_2,\ldots,x_n) \, : \, x_1,x_2,\ldots,x_n \in\Hilbert \bigr\}, 
\]
endowed with the inner product 
\[
\langle \bx, \by\rangle = \langle x_1, y_1 \rangle +  \langle x_2, y_2 \rangle +\cdots + \langle x_n, y_n \rangle, \quad \bx,\by\in\Hilbert^n.
\]
We use boldface for vectors in product spaces. The \emph{diagonal subspace}  of $\Hilbert^n$ is defined as
\[
\mathcal{D}_{\Hilbert^n} := \bigl\{ \mathbf{x} \in\Hilbert^n \, :  \, x_1 = x_2 = \ldots = x_n \}.
\] 
If $\Hilbert =\R^p$, then we will identify $\Hilbert^n$ with the space $\mathbb{R}^{n\times p}$, so that $\mathbf{x}=(x_1,x_2,\ldots,x_n)^\top$ and the induced norm coincides with the Frobenius norm.
 We will often consider a matrix $V\in\mathbb{R}^{n\times n}$ as a linear operator from $\mathbb{R}^{n\times p}$ to $\mathbb{R}^{n\times p}$. Of special importance will be those matrices  that are said to \emph{enforce consensus} on the variables of $\mathbb{R}^{n\times p}$, namely, verify $\ker{V} = \mathcal{D}_{\mathbb{R}^{n\times p}}$. Equivalently,  $V\mathbf{x}=0$ if and only if the $n$ rows of $\mathbf{x}\in \mathbb{R}^{n\times p}$ coincide.  Observe that this property is independent of~$p\in\N{\setminus}\{0\}$. Indeed, one can simply say that  $V\in\mathbb{R}^{n\times n}$ enforces consensus if, when viewed as a linear operator on $\R^n$, it satisfies $\ker{V} = \mathcal{D}_{\R^n}=\operatorname{span}\{\mathbbm{1}_n\}$ with $\mathbbm{1}_n\in\R^n$ being the vector of all ones.

Let $f:\Hilbert\to{(-\infty,+\infty]}$ be a function. The \emph{domain} of $f$ is $\dom f:=\{x\in\Hilbert\,:\, f(x)<+\infty\}$. The function $f$ is said to be \emph{proper} if $\dom f \neq \emptyset$, and \emph{lower semicontinuous} if, for all $\bar{x}\in\Hilbert$,  it holds $\liminf_{x\to\bar{x}} f(x) \geq f(\bar{x})$. 

We use $A:\Hilbert\setto\Hilbert$ for denoting a \emph{set-valued operator} in $\Hilbert$, in opposition to $B:\Hilbert\to\Hilbert$, that stands for a \emph{single-valued operator}. The \emph{domain} and \emph{graph} of $A$ are given by $\dom A := \{x\in\Hilbert\,:\, A(x)\neq \emptyset\}$ and $\gra A:=\{(x,u)\in\Hilbert\times\Hilbert\,:\, u\in A(x)\}$, respectively. In particular, observe that by writing $B:\Hilbert\to\Hilbert$ we readily imply $\dom B= \Hilbert$. The closure of the domain of an operator $A$ is denoted by $\overline{\dom}\, A$.  The \emph{inverse operator} of $A$ is the operator $A^{-1}$ such that $\gra A^{-1} = \{(u,x)\in\Hilbert\times\Hilbert\,:\, u\in A(x)\}$. The \emph{identity operator} is denoted by $\Id$. Given a linear operator $K:\Hilbert\to\G$, we use $K^*:\G\to\Hilbert$ to denote its \emph{adjoint}, which is the only linear operator such that $\langle Kx,y\rangle = \langle x,K^*y\rangle$, for all $(x,y)\in\Hilbert\times\G$. We use $\|K\|$ for the  (2-)norm of~$K$.

Let $U\subseteq \Hilbert$ be a nonempty set. An operator $B:\Hilbert\to\Hilbert$ is   \emph{Lipschitz continuous} on $U$ if  there exists a constant $L\geq 0$ such that 
\[
\|B(x)-B(y)\| \leq L \|x-y\|, \quad \text{for all } x,y\in U.
\]
We say that $B$ is \emph{locally Lipschitz continuous} if it is Lipschitz continuous on some neighbourhood of every point in $\Hilbert$. 

A (possibly) set-valued operator $A:\Hilbert\setto\Hilbert$ is said to be \emph{monotone} if
\[
\langle x-y, u-v\rangle \geq 0, \quad \forall (x,u),(y,v)\in\gra A.
\]
Furthermore, if there exists no monotone operator $C:\Hilbert\setto\Hilbert$ such that $\gra C$ properly contains $\gra A$, we say that $A$ is \emph{maximally monotone}. A  well-known example of maximally monotone operator is the \emph{subdifferential} of a proper lsc convex function $f:\Hilbert\to{(-\infty,+\infty]}$, which is the set-valued operator defined as
\[
\partial f \colon   \Hilbert   \setto   \Hilbert \colon x \mapsto   \left\{ u\in\Hilbert \, : \, f(x) + \langle u, y-x \rangle \leq f(y), \; \forall y\in\Hilbert \right\}.
\]
If $f=\iota_U$ is the \emph{indicator function} of a nonempty closed convex set $U\subseteq\Hilbert$, its subdifferential is given by the \emph{normal cone} of $U$, that is,
\[
\partial_{f}(x) = N_U(x):= 
\left\{
\begin{array}{ll}
	\{ u \in \Hilbert \, : \, \sup \, \langle u, U-x \rangle \leq 0 \}, & \text{if } x\in U; \\
	\emptyset, & \text{otherwise}.
\end{array}\right.
\]

Numerical algorithms devoted to addressing problems involving  monotone operators usually make use of the so-called \emph{resolvent operator}. The resolvent  of a maximally monotone operator $A:\Hilbert\setto\Hilbert$  is the single-valued operator $J_A:\Hilbert\to\Hilbert$ given by
\[
J_A = (\Id + A)^{-1}.
\]
While resolvent operators are not always easily calculated, closed formulas are known for a wide range of useful operators; see, e.g.,~\cite[Chapter~23]{bauschke2017convex}. In particular, when $A=\partial f$ with $f$ proper lsc and convex, the resolvent reduces to the \emph{proximity operator} of $f$, namely,
\[
J_{\partial f}(x) = \prox_f(x) := \argmin_{z} \bigl\{  f(z) + \frac{1}{2} \|z-x\|^2 \bigr\}.
\]
Let $A_1:\Hilbert\setto\Hilbert$ and $A_2:\G\setto\G$ be two maximally monotone operators. The Cartesian product $A:=A_1\times A_2:\Hilbert\times\G\setto\Hilbert\times\G$, defined as $A(\mathbf{x})=(A_1(x_1),A_2(x_2))$, where $\mathbf{x}=(x_1,x_2)\in\Hilbert\times\G$,  is also a maximally monotone operator~\cite[Proposition~20.23]{bauschke2017convex}. Furthermore, its resolvent is given component-wise as $J_A=(J_{A_1},J_{A_2})$, see~\cite[Proposition 23.18]{bauschke2017convex}.

\section{Application to regression models}\label{sect:example}

The study of decentralised algorithms for Problem~\ref{problem:1} is motivated by two regression models.

\subsection{Distributionally robust Poisson regression}

The first of our example consists of a distributionally robust optimisation approach to a Poisson regression problem.

\begin{example}\label{ex:1}
	Suppose $n$ agents cooperate to estimate a parameter vector $x\in\mathbb{R}^p$ for a Poisson regression model across $q$ operational environments. More precisely, each agent $i\in\{1,2,\ldots,n\}$ collects feature vectors $a_{i,j}\in\mathbb{R}^p$ and event counts $b_{i,j}\in\mathbb{N}$ under environment $j\in\{1,2,\ldots,q\}$. Maximum-likelihood estimation leads to maximising the probability terms
	\[
	x \mapsto \frac{\bigl(\exp(\langle a_{i,j}, x \rangle)\bigr)^{b_{i,j}} \exp\bigl(-\exp(\langle a_{i,j},x\rangle)\bigr)}{b_{i,j}!}, \quad i \in\{1,2,\ldots,n\}, \, j\in\{1,2,\ldots,q\}.
	\]
	This is in turn equivalent to minimising the negative log-likelihood, which leads to the following family of convex local loss functions
	\[
	\ell_{i,j}(x) = \exp(\langle a_{i,j},x \rangle) - b_{i,j} \langle a_{i,j},x \rangle, \quad i \in\{1,2,\ldots,n\}, \, j\in\{1,2,\ldots,q\}.
	\]
	To ensure robustness against unknown environment variations, the maximum-likelihood estimation can be framed into a distributionally robust optimisation framework, leading to the problem
	\begin{equation}\label{eq:DRO}
		\min_{x\in\mathbb{R}^p} \max_{y\in \Delta^q} \; \lambda \|x\|_1 + \sum_{i=1}^n \biggl( \sum_{j=1}^q y_j \ell_{i,j}(x) - \frac{\mu}{12 n}\sum_{j=1}^q y_j^4 \biggr),
	\end{equation}
	where $\Delta^q$ is the probability simplex of dimension $q$ and the regulariser $\lambda \| \cdot\|_1$, with $\lambda >0$, aims to promote sparsity on the parameter vector. Here, the dual variable $y = (y_1, \ldots, y_q)^T \in \Delta^q$ represents an adversarial probability distribution over the $q$ environmental conditions. Finally, the power penalty term $-\frac{\mu}{12n}\sum_{j=1}^q y_j^4$ derives from the Cressie--Read family of divergences~\cite{MR790631} and is used to prevent adversarial collapse onto a single worst-case environment; see, e.g.,~\cite{DuchiDivergences,MR4379565}.
	
	It is easy to see that~\eqref{eq:DRO} is in the form of~\eqref{eq:min-max} by setting $f_i(x):=\frac{\lambda}{n} \|x \|_1$, $\phi_i(x,y) := \sum_{j=1}^q \bigl(y_j \ell_{i,j}(x) - \frac{\mu}{12 n} y_j^4 \bigr)$, and $g_i(y) := \frac{1}{n}\iota_{\Delta^q}(y)$. Observe that 
    $\phi_i(\cdot,y)$ is convex for all $y\in [0,+\infty)^q$, $\phi_i(x,\cdot)$ is concave for all $x \in \R^p$, $\phi_i$ is smooth on the entire space $\mathbb{R}^p \times \mathbb{R}^q$, and both partial gradients $\nabla_x \phi_i(\cdot,\cdot)$ and $\nabla_y\phi_i(\cdot,\cdot)$ are locally Lipschitz continuous, but not globally.
\end{example}

Further examples illustrating the scope of Problem~\ref{problem:1} can be obtained by considering different loss functions. For instance, one can frame the multiple image deconvolution problem from corrupted Poisson data (see, e.g.,~\cite{atenas2024distributed,refId0}) within a distributionally robust setting.

\subsection{Distributed regression with high-order powers}

The second example involves linear regression for data contaminated with light-tailed noise. Consider the linear model $b = \langle a, x\rangle + \varepsilon$, where the noise vector $\varepsilon$ distributes according to a density known a priori to have light tails. In this setting, measuring the misfit with power of higher order may be statistically more efficient than least-squares, as the former can control larger residuals more aggressively.

\begin{example}\label{ex:2}
Let $n$ agents in a network cooperate to estimate a sparse parameter vector $x\in\R^p$ of a linear model. Each agent $i\in\{1,2,\ldots,n\}$ privately holds a design matrix $A_i\in\R^{m_i\times p}$, with rows $a_{i,1}^\top,\ldots,a_{i,m_i}^\top$, and a vector of observations $b_i\in\R^{m_i}$ whose noise is known to be light-tailed. In addition, the estimate is required to satisfy a system of $q$ linear inequality constraints encoding prior knowledge on the model, whose data are split among the agents. Given $\lambda>0$, the optimisation problem can thus be formulated as \begin{equation}\label{eq:L4primal}
		\min_{x\in\R^p} \; \lambda\|x\|_1 + \sum_{i=1}^n \ell_i(x) \quad \text{s.t.} \quad \sum_{i=1}^n (C_i x - d_i) \leq 0, \end{equation} 
        where, for all $i\in\{1,2,\ldots,n\}$, $C_i\in\R^{q\times p}$, $d_i\in\R^q$ and
        \begin{equation*}
		    \ell_i(x) := \frac{1}{4}\|A_i x - b_i\|_4^4 = \frac{1}{4}\sum_{j=1}^{m_i} \bigl(\langle a_{i,j},x\rangle - b_{i,j}\bigr)^4.
		\end{equation*} Dualising the coupling constraint with a multiplier $y\in\R^q_+$ yields the Lagrangian formulation \begin{equation}\label{eq:L4minmax}
		\min_{x\in\R^p}\max_{y\in\R^q_+} \; \lambda\|x\|_1 + \sum_{i=1}^n \Bigl( \ell_i(x) + \bigl\langle y, C_i x - d_i\bigr\rangle \Bigr).
	\end{equation} Problem~\eqref{eq:L4minmax} is in the form of~\eqref{eq:min-max} by setting $f_i(x):=\frac{\lambda}{n}\|x\|_1$, $g_i(y):=\frac{1}{n}\iota_{\R^q_+}(y)$ and $\phi_i(x,y):=\ell_i(x)+\langle y, C_i x-d_i\rangle$. Note that $\phi_i$ is convex-concave, and the partial gradients $\nabla_x \phi_i(\cdot,\cdot)$ and $\nabla_y\phi_i(\cdot,\cdot)$ are locally Lipschitz continuous.
\end{example}

\section{Reformulation as a monotone inclusion}\label{sect:rmi}

Under the assumption that Problem~\ref{problem:1} has at least a solution, we now obtain a reformulation as a monotone inclusion. First, observe that (\ref{eq:min-max}) can be alternatively written as 
\begin{equation}\label{eq:minimax_cons}
\min_{(x_1,\ldots,x_n)\in\R^{p\times n}} \max_{(y_1,\ldots,y_n)\in\R^{q\times n}} \sum_{i=1}^n \bigl(f_i(x_i) + \phi_i(x_i,y_i) - g_i(y_i) \bigr) \quad \text{s.t.} \quad
\left\{
\begin{array}{l}
x_1 = x_2 = \ldots = x_n, \\
y_1 = y_2 = \ldots = y_n.
\end{array}
\right.
\end{equation}
For convenience, let $f:\R^{n\times p}\to\Rext$, $g:\R^{n\times q}\to\Rext$ and $\phi:\R^{n\times p}\times\R^{n\times q}\to\R$ denote the functions
\begin{equation}\label{eq:fgphi}
f(\bx) = \sum_{i=1}^n f_i(x_i), \quad g(\by) = \sum_{i=1}^n g_i(y_i)  \quad \text{and} \quad \phi(\bx,\by) = \sum_{i=1}^n \phi_i(x_i,y_i),
\end{equation}
where $\bx := (x_1,\ldots,x_n)^{\top}\in\R^{n\times p}$  and $\by:=(y_1,\ldots,y_n)^{\top}\in\R^{n\times q}$.
Let $K_1, K_2\in\R^{n\times n}$ be matrices ensuring consensus on $\R^{n\times p}$ and $\R^{n\times q}$, respectively.  Then (\ref{eq:minimax_cons}) is equivalent to the problem
\begin{equation}\label{eq:minimax_pd0}
\min_{\bx\in\R^{n\times p}}\max_{\by\in\R^{n\times q}} f(\bx) + \iota_{\{0\}}(K_1\bx) + \phi(\bx,\by) - \iota_{\{0\}}(K_2\by) - g(\by).
\end{equation}

We will tackle~(\ref{eq:minimax_pd0}) by addressing the monotone inclusion resulting from its first order optimality conditions. 
Let $\bz:=(\bx,\by)$. Let the operators $A: \R^{n\times p}\times\R^{n\times q}\setto \R^{n\times p}\times\R^{n\times q}$, $B: \R^{n\times p}\times\R^{n\times q}\to \R^{n\times p}\times\R^{n\times q}$ and the linear operator $K:\R^{n\times p}\times\R^{n\times q}\to \R^{n\times p}\times\R^{n\times q}$ be
\begin{equation}\label{eq:opreform} 
A(\bz) := \bigl(\partial f(\bx), \partial g(\by)\bigr), \quad B(\bz):=\bigl(\nabla_{\bx} \phi(\bx,\by), -\nabla_{\by} \phi(\bx,\by)\bigr) \quad \text{and} \quad K\bz:= (K_1\bx, K_2\by),
\end{equation}
where $\partial f = \bigtimes_{i=1}^n \partial f_i$ and $\partial g = \bigtimes_{i=1}^n \partial g_i$ by~\cite[Proposition~16.9]{bauschke2017convex}.
In particular, the operator $A$ is maximally monotone as the Cartesian product of maximally monotone operators, while $B$  is locally Lipschitz continuous on $\overline{\dom}\, A$ and monotone on $\dom A$ by assumption.
Hence, by the subdifferential inclusion~\cite[Proposition~16.6]{bauschke2017convex}, a sufficient condition for a  vector $\bz\in \R^{n\times p}\times\R^{n\times q}$ to be a solution of~(\ref{eq:minimax_pd0}) is to solve the monotone inclusion problem consisting in 
 \begin{equation}\label{eq:mm_as_mi}
 \text{find } \bz\in \R^{n\times p}\times\R^{n\times q} \text{ such that } 0\in \bigl(A+B\bigr)(\bz) + K^*N_{\{0\}}\bigl(K\bz\bigr).
 \end{equation}
 Clearly, the above is in the form of Problem~\ref{problem:mi}. If in addition we  want to ensure that both~(\ref{eq:min-max}) and~(\ref{eq:mm_as_mi}) are equivalent, we further need the following subdifferential sum rules to hold:
 \begin{equation}\label{eq:sumrule}
 	\partial f + K_1^* \circ \partial \iota_{\{0\}} \circ K_1 =  \partial \left(f + \iota_{\{0\}} \circ K_1 \right), \quad \partial g + K_2^* \circ \partial \iota_{\{0\}} \circ K_2 =  \partial \left(g + \iota_{\{0\}} \circ K_2 \right).
 \end{equation}
 By~\cite[Theorem~16.47 \& Proposition~6.19(x)]{bauschke2017convex} for~\eqref{eq:sumrule} to hold it suffices that $\cap_{i=1}^n \ri(\dom f_i) \neq \emptyset$ and $\cap_{i=1}^n \ri(\dom g_i) \neq \emptyset$. Observe that the latter trivially hold in Examples~\ref{ex:1} and~\ref{ex:2}. We summarise all of the above in the following proposition.
 
 \begin{proposition}\label{prop:sp-mi}
 	Suppose that Problem~\ref{problem:1} has at least a solution and that the subdifferential sum rules in~\eqref{eq:sumrule} hold. Then a vector $\bz^*=(\bx^*,\by^*)\in \R^{n\times p}\times\R^{n\times q}$ solves~\eqref{eq:mm_as_mi} if and only if $x^*:=x_1^*=\ldots=x_n^*\in\R^p$, $y^*:=y_1^*=\ldots=y_n^*\in\R^q$ and $(x^*,y^*)$ is a saddle point of~\eqref{eq:min-max}.
 \end{proposition}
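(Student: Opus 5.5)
The plan is to pass through the decoupled problem~\eqref{eq:minimax_pd0}. Write
\[
F(\bx,\by):=f(\bx)+\iota_{\{0\}}(K_1\bx)+\phi(\bx,\by)-\iota_{\{0\}}(K_2\by)-g(\by).
\]
I will prove two equivalences:
\begin{enumerate}[label=(\roman*)]
\item $\bz^*=(\bx^*,\by^*)$ solves~\eqref{eq:mm_as_mi} if and only if $\bz^*$ is a saddle point of $F$;
\item $\bz^*$ is a saddle point of $F$ if and only if $\bx^*,\by^*$ are consensual and their common rows $(x^*,y^*)$ form a saddle point of $\Psi$.
\end{enumerate}

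\textbf{Step (ii).} This part is elementary. If $K_1\bx\neq 0$ or $K_2\by\neq0$, then $F$ takes the value $+\infty$ or $-\infty$ there. A saddle point of $F$ must have a finite value, since $\Psi$, and hence $F$ on the diagonal, is finite at a saddle point. This is where the hypothesis that Problem~\ref{problem:1} has a solution would be used. So a saddle point lies in $\ker K_1\times\ker K_2$. Because $K_1,K_2$ enforce consensus, this set is $\mathcal{D}_{\R^{n\times p}}\times\mathcal{D}_{\R^{n\times q}}$. On this set, with all rows equal to $x$ and $y$ respectively, $F(\bx,\by)=\Psi(x,y)$. Off it, $F(\cdot,\by^*)=+\infty$ and $F(\bx^*,\cdot)=-\infty$. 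Therefore the two saddle-point inequalities for $F$ are exactly those for $\Psi$.

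\textbf{Step (i).} Fix $\by^*\in\dom g$ and set $h:=f+\iota_{\{0\}}\circ K_1$. The function $\bx\mapsto h(\bx)+\phi(\bx,\by^*)$ is convex, because $\phi(\cdot,\by^*)$ is convex on the convex set $\dom f$ and the sum is $+\infty$ outside $\dom f$. Hence $\bx^*$ minimises $F(\cdot,\by^*)$ if and only if $0\in\partial\bigl(h+\phi(\cdot,\by^*)\bigr)(\bx^*)$. I then need the sum rule
\[
\partial\bigl(h+\phi(\cdot,\by^*)\bigr)(\bx^*)=\partial h(\bx^*)+\nabla_{\bx}\phi(\bx^*,\by^*).
\]
Once this holds, the hypothesis~\eqref{eq:sumrule} turns $\partial h$ into $\partial f+K_1^*\circ N_{\{0\}}\circ K_1$. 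The same argument applied to $-F(\bx^*,\cdot)$ with $g$ and $K_2$ gives the $\by$-component. The two conditions together are precisely $0\in(A+B)(\bz^*)+K^*N_{\{0\}}(K\bz^*)$ with the operators in~\eqref{eq:opreform}. For the converse direction, note that any solution of~\eqref{eq:mm_as_mi} has $\bz^*\in\dom A$ and $K\bz^*\in\dom N_{\{0\}}=\{0\}$, so it lies in the region where the above applies.

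\textbf{Main obstacle.} The difficulty is the sum rule with the smooth term, since $\phi(\cdot,\by^*)$ is convex only on $\dom f$ and not on the whole space. The inclusion ``$\supseteq$'' follows directly: add the subgradient inequality for $h$ to the gradient inequality for $\phi(\cdot,\by^*)$, which holds on the convex set $\dom f\supseteq\dom h$. For ``$\subseteq$'', I would argue through directional derivatives. Take $\bu$ in the subdifferential of the sum at $\bx^*$ and any $\bx\in\dom h$, and set $\bx_t:=\bx^*+t(\bx-\bx^*)$. Convexity of $h$ gives $h(\bx_t)\le (1-t)h(\bx^*)+t\,h(\bx)$. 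Combine this with the subgradient inequality at $\bx_t$, divide by $t$, and let $t\downarrow0$ using differentiability of $\phi$. This yields
\[
h(\bx)-h(\bx^*)\geq\langle \bu-\nabla_{\bx}\phi(\bx^*,\by^*),\bx-\bx^*\rangle,
\]
which is the claim. The remaining identifications, namely $N_{\{0\}}(0)$ being the whole space and the product structure of $\partial f$ and $\partial g$, are routine.
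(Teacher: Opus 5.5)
Your proposal is correct and follows the paper's route: pass to the consensus reformulation~\eqref{eq:minimax_pd0}, identify its saddle points with consensual saddle points of $\Psi$, and match these with~\eqref{eq:mm_as_mi} through Fermat's rule and the sum rules~\eqref{eq:sumrule}. Your directional-derivative proof of $\partial\bigl(h+\phi(\cdot,\by^*)\bigr)(\bx^*)=\partial h(\bx^*)+\nabla_{\bx}\phi(\bx^*,\by^*)$ is needed because $\phi(\cdot,\by^*)$ is convex only on $\dom f$, and it fills in a step the paper leaves implicit.

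The one sentence to repair is the finiteness claim in Step~(ii). As written, it infers finiteness at an arbitrary saddle point of $F$ from finiteness on the diagonal, so it presupposes what it is meant to show. You only need $\Psi$ to be finite at a saddle point $(x^*,y^*)$, and this follows from $-\infty<\Psi(x^*,\bar y)\leq\Psi(x^*,y^*)\leq\Psi(\bar x,y^*)<+\infty$ for any $\bar x\in\bigcap_i\dom f_i$ and $\bar y\in\bigcap_i\dom g_i$. Such points exist because the solution of Problem~\ref{problem:1} is (tacitly, also in the paper) finite-valued.
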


Proposition~\ref{prop:sp-mi} motivates the development of numerical schemes for solving Problem~\ref{problem:mi}.

\section{A new primal-dual forward-reflected splitting algorithm with linesearch}\label{sect:PDFR}

In this section, we propose a new splitting method with linesearch for addressing Problem~\ref{problem:mi}. Hence, in what follows we assume that $\Hilbert$ and $\G$ are finite-dimensional Hilbert spaces.

The presence of the linear compositions over the operator $C$ makes it convenient to address the resolution of~(\ref{eq:monotone_inclusion}) by means of its associated \emph{primal-dual problem}. That is, we search for a pair $(z^*,u^*)\in\Hilbert\times\G$ such that 
\begin{equation}\label{eq:pdincl}
\left[
\begin{array}{c}
0 \\
0
\end{array}
\right] \in \left[
\begin{array}{cc}
C^{-1} &  -K \\
K^* & A+B
\end{array}
\right]
\left[
\begin{array}{c}
u^* \\
z^*
\end{array}
\right].
\end{equation}
We refer to such a pair as a \emph{primal-dual solution} of~(\ref{eq:monotone_inclusion}). 
It is a simple algebraic exercise to check that a pair $(z^*,u^*)$ is a primal-dual solution if and only if $z^*$ is a solution of the (primal) problem~(\ref{eq:monotone_inclusion}) and $u^*$ provides a solution of an \emph{Attouch--Th\'era dual} problem~\cite{MR1422748}.

In order to obtain a primal-dual solution of~(\ref{eq:monotone_inclusion}), we develop a splitting algorithm that incorporates a linesearch procedure to account for the lack of knowledge of the (local) Lipschitz modulus of the operator $B$. Our method does assume knowledge of the norm of the linear operator $K$. We name this method \emph{primal-dual forward-reflected with linesearch} (PDFRL in short)   and present its pseudocode in Algorithm~\ref{alg:pdls}. Observe that the resolvent in the dual update uses the stepsize computed in the previous iteration of the algorithm. In principle, the order of Step~1 and Step~2 can be interchanged, however this would require computing an extra resolvent of $C^{-1}$ for every inner iteration of the linesearch, increasing the computational cost of the algorithm.

\begin{algorithm}
\caption{Primal-dual forward-reflected with  linesearch (PDFRL) for Problem~\ref{problem:mi}}\label{alg:pdls}
Choose initial points $z^{-1}=z^{0}\in \dom A$, $u^0\in\G$\;
Choose  parameters $\delta\in{(0,1)}$ and  $\beta\in{(0,1)}$\;
Choose an initial stepsize $\tau_{-1} \in{\left(0, \delta \frac{1-\beta}{2\|K\|}\right)}$\;
\For{$k=0,1,\ldots$}
{

$\rhd$ \textbf{Step 1} (Dual update). Define
\begin{equation}\label{eq:dual_update}
u^{k+1} = J_{\tau_{k-1} C^{-1}} \big( u^k + \tau_{k-1} K z^{k} \big);
\end{equation}

$\rhd$ \textbf{Step 2} (Abstract linesearch and primal update). Compute
\begin{equation}\label{eq:primal_update}
z^{k+1} = J_{\tau_k A}\big( z^k - \tau_k K^*u^{k+1} - \tau_{k-1} K^*(u^{k+1}-u^k) - \tau_k B(z^k) - \tau_{k-1} (B(z^k)-B(z^{k-1}) ) \big),
\end{equation}
where $\tau_k \in{\left(0,\delta\frac{1-\beta}{2\|K\|}\right)}$ satisfies
\begin{equation}\label{eq:linesearch}
\tau_k \|B(z^{k+1})-B(z^k)\| \leq \frac{\sqrt{\beta}}{2} \|z^{k+1}-z^k\|.
\end{equation}
}
\end{algorithm}

\begin{remark}\label{remark:particularcases}
We collect some algorithms that can be recovered as special cases or are related to Algorithm~\ref{alg:pdls}.
\begin{enumerate}[label=(\roman*)]

\item\label{remark:pc-1} If $B$ is globally Lipschitz, a fixed stepsize $\tau$ can be taken according to the Lipschitz modulus. The update of the algorithm will then read as
\begin{equation}\label{eq:pdtr}
\left\{
\begin{aligned}
	u^{k+1} &= J_{\tau C^{-1}} \big( u^k + \tau K z^{k} \big), \\
	z^{k+1} &= J_{\tau A}\big( z^k - \tau K^* (2 u^{k+1} -u^k) - 2 \tau B(z^k) + \tau B(z^{k-1})  \big).
\end{aligned}
\right.
\end{equation}
This scheme is  the \emph{primal-dual twice reflected} algorithm in~\cite{malitsky2023first}, but with extrapolation performed on  the dual variable. Indeed, analogously to the algorithm in~\cite{malitsky2023first},  iteration~\eqref{eq:pdtr} can be recovered as an instance of the \emph{forward-reflected-backward method} (FoRB)~\cite{MR4101375} under a different metric. Namely, \eqref{eq:pdtr} is given by the scheme
\[
\omega^{k+1} = J_{M^{-1} G} \bigl(\omega^k - 2 M^{-1} F(\omega^k)  + M^{-1} F(\omega^{k-1})\bigr),
\] 
with $\omega^k = (z^k,u^k)$ and where the matrix $M$ and the operators $G, F$  are given by
\[
M = 
\begin{bmatrix}
	\frac{1}{\tau} \Id & K^* \\ K & \frac{1}{\tau} \Id 
\end{bmatrix},
\quad 
G = \begin{bmatrix}
	A &  K^* \\ -K & C^{-1} 
\end{bmatrix}
\quad 
\text{and}
\quad
F = \begin{bmatrix}
	B & 0 \\ 0 & 0
\end{bmatrix}.
\]

\item If in addition $B=0$, we recover the scheme
\[
\left\{
\begin{aligned}
	u^{k+1} &= J_{\tau C^{-1}} \big( u^k + \tau K z^{k} \big), \\
	z^{k+1} &= J_{\tau A}\big( z^k- \tau K^*(2u^{k+1}-u^k)   \big),
\end{aligned}
\right.
\]
which corresponds to an instance of the \emph{primal-dual hybrid gradient} method~\cite{MR2782122}.
\item If $K=0$, then updates~\eqref{eq:dual_update}-\eqref{eq:primal_update} become
\begin{equation}\label{eq:pc-3}
\left\{
\begin{aligned}
u^{k+1} &= J_{\tau_{k-1}C^{-1}} \big(u^k\big), \\
z^{k+1} &= J_{\tau_k A}  \bigl(z^k - \tau_k B(z^k) - \tau_{k-1}(B(z^k)-B(z^{k-1}))\bigr).
\end{aligned}
\right.
\end{equation}
Hence, two independent iterations are performed by the method: a variable stepsize \emph{proximal point algorithm} for $C^{-1}$ and the \emph{forward-reflected-backward with linesearch}~\cite[Algorithm~3.1]{MR4101375} for  $A+B$.


\item If in addition $A=0$ and $B$ is globally Lipschitz continuous, the $(z^k)$ iteration in~\eqref{eq:pc-3} reduces to
\[
z^{k +1} = z^k  -  2\tau B(z^k) + \tau B(z^{k-1}).
\]
This is the so-called \emph{optimistic gradient descent ascent} (OGDA) (also known as \emph{optimistic mirror descent}) within the GAN literature~\cite{daskalakis2018training}, which in turn is recovered from the unconstrained version of \emph{Popov's algorithm}~\cite{popov1980modification} 
\[
\left\{
\begin{aligned}
	\bar{z}^{k+1} &= \bar{z}^k - \tau B(z^k), \\
	 z^{k+1}  &= \bar{z}^{k+1} - \tau B(z^k),
\end{aligned}
\right.
\]
under the initialisation $\bar{z}^0 = z^0 + \tau B(z^{-1})$.
\end{enumerate}
\end{remark}

\subsection{Convergence analysis}
In this section, the convergence of Algorithm~\ref{alg:pdls} is established.
First, the following lemma ensures that the linesearch in Step~2  is well-defined. 
\begin{lemma}\label{lemma:lswd}
Suppose that the assumptions of Problem~\ref{problem:mi} hold. Consider the sequences $(z^k)$ and $(u^k)$ generated by Algorithm~\ref{alg:pdls}. Then,  for all $k\in\N$, there exists $\zeta_k>0$ such that  
\[
\tau \|B(z^{k+1})-B(z^k)\| \leq \frac{\sqrt{\beta}}{2}\|z^{k+1}-z^k\|, \quad \text{for all } \tau\in{(0,\zeta_k)}.
\]
\end{lemma}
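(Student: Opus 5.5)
The plan is to argue by induction on $k$, so that at iteration $k$ the quantities $z^{k-1},z^k\in\dom A$, $u^k$, $\tau_{k-1}>0$ are already defined. Step~1 then fixes $u^{k+1}$ independently of the trial stepsize $\tau$. Hence the only object depending on $\tau$ is the trial point
\[
z^{k+1}(\tau):=J_{\tau A}\bigl(w^k-\tau(K^*u^{k+1}+B(z^k))\bigr),
\qquad w^k:=z^k-\tau_{k-1}\bigl(K^*(u^{k+1}-u^k)+B(z^k)-B(z^{k-1})\bigr).
\]
Note that $w^k$ does not depend on $\tau$. Since $z^{k+1}(\tau)\in\dom A$ for every $\tau>0$, once the claim is proved an admissible $\tau_k$ exists, and the induction can continue with $z^{k+1}\in\dom A$. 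The difficulty is that $z^{k+1}(\tau)$ does \emph{not} tend to $z^k$ as $\tau\downarrow 0$, because of the fixed reflection term inside $w^k$. So I will not look for smallness of $\|z^{k+1}(\tau)-z^k\|$. Instead I will find a single Lipschitz constant $L_k$ for $B$ that is valid for all trial points with $\tau\in(0,1]$, and then set $\zeta_k:=\min\{1,\sqrt\beta/(2L_k)\}$ (with $1/0=+\infty$).

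\textbf{Step 1: the trial points are bounded.} Fix $a\in\dom A$ and $a^*\in A(a)$. By nonexpansiveness of $J_{\tau A}$ and the identity $J_{\tau A}(a+\tau a^*)=a$,
\[
\|z^{k+1}(\tau)-a\|\le \|w^k-\tau(K^*u^{k+1}+B(z^k))-a-\tau a^*\| \le \|w^k-a\|+\|K^*u^{k+1}+B(z^k)\|+\|a^*\|
\]
for all $\tau\in(0,1]$. Consequently the set
\[
S:=\overline{\{z^k\}\cup\{z^{k+1}(\tau):\tau\in(0,1]\}}
\]
is compact. It is also contained in $\overline{\dom}\,A$, since every trial point and $z^k$ lie in $\dom A$.

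\textbf{Step 2: a uniform Lipschitz constant on $S$.} This step is the one requiring care, because local Lipschitz continuity does not immediately give a constant valid on all of $S$. Cover $S$ by finitely many open balls $U_j$ on which $B$ is $L_j$-Lipschitz, and let $\varepsilon>0$ be a Lebesgue number of this cover. For $x,y\in S$ there are two cases:
\begin{enumerate}[label=(\alph*)]
\item if $\|x-y\|<\varepsilon$, then $x,y$ lie in a common $U_j$, so $\|B(x)-B(y)\|\le (\max_j L_j)\|x-y\|$;
\item otherwise, $\|B(x)-B(y)\|\le 2M\le (2M/\varepsilon)\|x-y\|$, where $M:=\max_S\|B\|$ is finite by continuity of $B$ and compactness of $S$.
\end{enumerate}
Hence $B$ is $L_k$-Lipschitz on $S$ with $L_k:=\max\{\max_j L_j,\,2M/\varepsilon\}$.

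\textbf{Step 3: conclusion.} For $\tau\in(0,\zeta_k)$ we have $z^{k+1}(\tau),z^k\in S$, so
\[
\tau\|B(z^{k+1}(\tau))-B(z^k)\|\le \tau L_k\|z^{k+1}(\tau)-z^k\|\le \tfrac{\sqrt\beta}{2}\|z^{k+1}(\tau)-z^k\|,
\]
which is the asserted inequality. It remains compatible with the constraint $\tau_k<\delta\frac{1-\beta}{2\|K\|}$, since both requirements are upper bounds on $\tau$.
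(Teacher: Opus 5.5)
Your proof is correct, but it takes a different route from the paper's.

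\textbf{The paper's route.} It localises the trial points through a limit argument. It cites an external result giving $z^{k+1}(\tau)\to P_{\overline{\dom}\,A}\bigl(z^{k+1}(0)\bigr)$ as $\tau\downarrow 0$. It then places $z^k$ and this limit point in a bounded convex set $U$, relatively open in $\overline{\dom}\,A$, and uses Lipschitz continuity of $B$ on the compact convex set $\overline{U}$. That last step implicitly relies on convexity of $\overline{\dom}\,A$ and a segment argument.

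\textbf{Your route.} You bound all trial points with $\tau\in(0,1]$ directly, using nonexpansiveness of $J_{\tau A}$ and the identity $J_{\tau A}(a+\tau a^*)=a$. This is exactly the estimate the paper only uses later, in Remark~\ref{remark:backtracking}, to verify Assumption~\ref{ass:tauk}. You then obtain a Lipschitz constant on the possibly nonconvex compact set $S$ from a finite subcover and a Lebesgue number. This works even if local Lipschitz continuity is only relative to $\overline{\dom}\,A$, since $S\subseteq\overline{\dom}\,A$.

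\textbf{Comparison.} Your version is self-contained and more elementary. It needs neither the cited convergence of resolvents to the projection nor convexity of the set on which the Lipschitz constant is taken. Your threshold $\zeta_k=\min\{1,\sqrt{\beta}/(2L_k)\}$ is explicit. The same argument, made uniform over bounded sequences $(z^k,u^k)$, gives the lower bound on backtracking stepsizes almost for free.

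The paper's argument only needs Lipschitz continuity of $B$ near $z^k$ and near the limit point, a smaller region than your $S$. The price is an unquantified threshold $\eta_k$ coming from the limit.

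Your explicit induction, which ensures $z^k\in\dom A$ and that each iteration is well defined before the lemma is applied, is a piece of care the paper leaves implicit.
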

\begin{proof}
Let $k\in \N$ fixed. For convenience let us denote 
\begin{equation}\label{eq:zk1tau}
z^{k+1}(\tau):= J_{\tau A}\big( z^k - \tau K^*u^{k+1} - \tau_{k-1} K^*(u^{k+1}-u^k) - \tau B(z^k) - \tau_{k-1} (B(z^k)-B(z^{k-1}) ) \big),
\end{equation}
 with $\tau$ taking values in the nonnegative real line. In particular, we have 
\begin{equation}\label{eq:zk10}
z^{k+1}(0) = z^k - \tau_{k-1}K^*(u^{k+1}-u^k)- \tau_{k-1}(B(z^k)-B(z^{k-1})).
\end{equation}
By \cite[Proposition~3.4]{MR5098995}, $z^{k+1}(\tau)\to P_{\overline{\dom}\, A}\big(z^k - \tau_{k-1} K^*(u^{k+1}-u^k) - \tau_{k-1} (B(z^k)-B(z^{k-1}) )\bigr) = P_{\overline{\dom}\, A}\big(z^{k+1}(0)\bigr) $ as $\tau \,{\downarrow}\, 0$, where $ P_{\overline{\dom}\, A}$ stands for the projection onto the closure of the domain of~$A$.

Let $U\subseteq\overline{\dom}\, A$ be a bounded convex set such that it is open with respect to $\overline{\dom}\, A$ and that contains the pair $\bigl\{z^{k}, P_{\overline{\dom}\, A}\big(z^{k+1}(0)\bigr)\bigr\}$. Since we have proved that  $z^{k+1}(\tau)\subset\dom A$ converges to $P_{\overline{\dom}\, A}\big(z^{k+1}(0)\bigr)$ as  $\tau \,{\downarrow}\, 0$, then for all sufficiently small $\tau$ we can ensure that $z^{k+1}(\tau)$ belongs to~$U$.  Recall now that as a locally Lipschitz continuous operator on a finite dimensional space, $B$ is Lipschitz continuous on any   nonempty compact convex subset, and in particular on  $\overline{U}\subseteq\overline{\dom}\, A $. Hence, there exist constants $L\geq 0$  and $\eta_k>0$  such that
\[
\tau \bigl\| B\bigl(z^{k+1}(\tau)\bigr) - B\bigl(z^k\bigr) \big\| \leq \tau L \bigl\| z^{k+1}(\tau) - z^k\|, \quad \text{for all }   \tau\in{(0,\eta_k)}.
\]
To conclude the claim it suffices to take $\zeta_k:=\min\{\eta_k, \sqrt{\beta}/{(2L)}\}$.

\end{proof}

As mentioned in Remark~\ref{remark:particularcases}\ref{remark:pc-1}, Algorithm~\ref{alg:pdls} is closely related to the \emph{primal-dual twice reflected} (PDTR) method in~\cite{malitsky2023first}. While the convergence analysis of PDTR draws upon its identification as an instance of the forward-reflected-backward algorithm~\cite{MR4101375} under a matrix-induced metric, we here make use of a direct Lyapunov-based analysis to establish the convergence of Algorithm~\ref{alg:pdls}. 
Fix a pair $(z,u)\in\Hilbert\times\G$, and for all $k\in\N$, we define the energy functions
\begin{align}
&\cE^{(z,u)}_k := \frac{1}{2}\big( \|z^k-z\|^2 + \|u^k-u\|^2\big) + \tau_{k-1} \langle K(z^k-z), u^k-u \rangle, \label{eq:energy} \\
&\cV_k^{(z,u)} := \cE_k^{(z,u)} - \tau_{k-1}\langle B(z^k)-B(z^{k-1}), z^k-z\rangle + \frac{1}{4}\|z^k-z^{k-1}\|^2. \label{eq:Vzu}
\end{align}
Note that, by  Cauchy--Schwarz and Young's inequalities, $\cE^{(z,u)}_k \geq 0$ whenever $0 < \tau_{k-1}\|K\| \leq 1$. 
In addition,   $\cV_k^{(z,u)}$ is also nonnegative thanks to the linesearch condition in Algorithm~\ref{alg:pdls}. We prove this in the following result.
\begin{lemma}\label{lemma:cVnonnegative}
  Suppose that the assumptions of Problem~\ref{problem:mi} hold.  Consider the sequences $(z^k)$ and $(u^k)$ generated by Algorithm~\ref{alg:pdls}. Then, for any fixed $(z,u)\in\Hilbert\times\G$ and for all $k\in\N$, it holds
\begin{equation}\label{eq:Vlower}
\begin{aligned}
    \cV^{(z,u)}_{k} &\geq \cE^{(z,u)}_{k} - \frac{\beta}{4}\|z^{k}-z\|^2 \\ &\geq \frac{1}{2}\left(1- \frac{\beta}{2} - \tau_{k-1}\|K\|\right) \|z^{k}-z\|^2 + \frac{1}{2} \left(1- \tau_{k-1}\|K\|\right) \|u^{k}-u\|^2 \geq 0.
\end{aligned}
\end{equation}
\end{lemma}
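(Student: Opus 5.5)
The plan is to bound the only sign-indefinite term of $\cV^{(z,u)}_k$, namely $-\tau_{k-1}\langle B(z^k)-B(z^{k-1}), z^k-z\rangle$, using the linesearch condition~\eqref{eq:linesearch} from the \emph{previous} iteration. The term $\frac14\|z^k-z^{k-1}\|^2$ in~\eqref{eq:Vzu} is there precisely to absorb part of this bound. After that, the bilinear coupling term inside $\cE^{(z,u)}_k$ is handled by Cauchy--Schwarz and Young.

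\textbf{Case $k=0$.} Since $z^{-1}=z^0$, the extra terms in~\eqref{eq:Vzu} vanish, so $\cV^{(z,u)}_0=\cE^{(z,u)}_0\geq \cE^{(z,u)}_0-\frac{\beta}{4}\|z^0-z\|^2$ trivially.

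\textbf{Case $k\geq1$.} The stepsize $\tau_{k-1}$ was accepted in Step~2 of iteration $k-1$; Lemma~\ref{lemma:lswd} guarantees such a stepsize exists. Hence
\[
\tau_{k-1}\|B(z^k)-B(z^{k-1})\|\leq \frac{\sqrt\beta}{2}\|z^k-z^{k-1}\|.
\]
Cauchy--Schwarz then gives
\[
\bigl|\tau_{k-1}\langle B(z^k)-B(z^{k-1}),z^k-z\rangle\bigr|\leq \frac{\sqrt\beta}{2}\|z^k-z^{k-1}\|\,\|z^k-z\|.
\]
Young's inequality in the form $\frac{\sqrt\beta}{2}ab\leq \frac14 a^2+\frac{\beta}{4}b^2$ bounds the right-hand side by $\frac14\|z^k-z^{k-1}\|^2+\frac{\beta}{4}\|z^k-z\|^2$. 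The first piece cancels against the $+\frac14\|z^k-z^{k-1}\|^2$ in~\eqref{eq:Vzu}, which yields the first inequality in~\eqref{eq:Vlower}.

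\textbf{Second inequality.} I will write
\[
\tau_{k-1}\langle K(z^k-z),u^k-u\rangle\geq -\tau_{k-1}\|K\|\,\|z^k-z\|\,\|u^k-u\|\geq -\frac{\tau_{k-1}\|K\|}{2}\bigl(\|z^k-z\|^2+\|u^k-u\|^2\bigr).
\]
Inserting this into~\eqref{eq:energy} and subtracting $\frac\beta4\|z^k-z\|^2$ gives exactly the displayed coefficients $\frac12\bigl(1-\frac\beta2-\tau_{k-1}\|K\|\bigr)$ and $\frac12(1-\tau_{k-1}\|K\|)$.

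\textbf{Nonnegativity.} By construction, every stepsize (including $\tau_{-1}$) lies in $\bigl(0,\delta\frac{1-\beta}{2\|K\|}\bigr)$. Thus $\tau_{k-1}\|K\|<\frac{1-\beta}{2}<1-\frac\beta2<1$, so both coefficients are positive. If $K=0$, the bound is immediate.

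The main point of care is the indexing: the linesearch inequality must be the one certified at iteration $k-1$, linking $z^k$, $z^{k-1}$ and $\tau_{k-1}$. The case $k=0$ relies on the initialisation $z^{-1}=z^0$. Beyond that, the argument is routine.
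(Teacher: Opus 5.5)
Your proposal is correct and follows essentially the same route as the paper. You treat $k=0$ via $z^{-1}=z^0$, and for $k\geq1$ you absorb the $B$-term into $\frac14\|z^k-z^{k-1}\|^2$ using the linesearch inequality from iteration $k-1$ with Cauchy--Schwarz and Young. The bilinear term is then handled by Cauchy--Schwarz and Young, and positivity comes from $\tau_{k-1}\|K\|<\delta(1-\beta)/2<1-\beta/2$.
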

\begin{proof}
For the first inequality, observe that $z^{0}=z^{-1}$, by initialisation of Algorithm~\ref{alg:pdls}. This implies $\cV_0^{(z,u)}=\cE_0^{(z,u)}$, and the inequality trivially holds for $k=0$. For $k\in\N{\setminus}\{0\}$, using Cauchy--Schwarz and~\eqref{eq:linesearch} at iteration $k-1$ followed by Young's inequality, we get 
\begin{equation}\label{eq:lsestimation}\begin{aligned}
    - \tau_{k-1} \langle B(z^{k}) - B(z^{k-1}),z^{k}-z\rangle  & \geq  -  \frac{\sqrt{\beta}}{2} \|z^{k}-z^{k-1}\|\|z^{k}-z\| \\& \geq  - \frac{1}{4} \|z^{k}-z^{k-1}\|^2 - \frac{\beta}{4} \|z^{k}-z\|^2,
\end{aligned}
\end{equation} and hence $\cV^{(z,u)}_{k} \geq \cE^{(z,u)}_{k} - \frac{\beta}{4}\|z^{k}-z\|^2$. Bounding the bilinear term in~\eqref{eq:energy} by Cauchy--Schwarz and Young's inequalities yields the second inequality in~\eqref{eq:Vlower}. The third one follows from the fact that $\tau_{k-1}\|K\| < \delta(1-\beta)/2 < 1-\beta/2$, for all $k\in\N$.
\end{proof}

To conduct our convergence analysis, we first prove a technical lemma on the functions $\cV^{(z,u)}_k$ that involves the following sequences:
{\small
\begin{align}
&\omega^{k+1} := \frac{1}{\tau_{k-1}}\bigl( u^k-u^{k+1}+\tau_{k-1}Kz^k \bigr), \label{eq:c-graph_elements}\\
&\nu^{k+1} := \frac{1}{\tau_k}\bigl( z^k-z^{k+1} - \tau_k K^*u^{k+1} - \tau_{k-1}K^*(u^{k+1}-u^k)  -\tau_k B(z^k) - \tau_{k-1}(B(z^k)-B(z^{k-1})) \bigr).  \label{eq:a-graph_elements}
\end{align}}%
Note that~\eqref{eq:dual_update} and~\eqref{eq:primal_update} yield $\omega^{k+1}\in\gra C^{-1}(u^{k+1})$ and $\nu^{k+1}\in\gra{A}(z^{k+1}
)$, for all $k\in\N$.

\begin{lemma}\label{l:intermediate}
    Suppose that the assumptions of Problem~\ref{problem:mi} hold.  Consider the sequences $(z^k)$ and $(u^k)$ generated by Algorithm~\ref{alg:pdls}. For any fixed pair $(z,u)\in\Hilbert\times\G$ and all $k\in\N$, define the residual \begin{equation}\label{eq:residual}
\cR_k^{(z,u)} := \tau_k \bigl\langle z^{k+1}-z, \, \nu^{k+1} + K^*u + B(z^{k+1}) \bigr\rangle + \tau_{k-1} \bigl\langle u^{k+1}-u, \, \omega^{k+1} - Kz \bigr\rangle.
\end{equation} Then 
\begin{equation} \label{eq:intermediate}\begin{aligned}
    \cR_k^{(z,u)} + \cV^{(z,u)}_{k+1} & \leq \cV^{(z,u)}_{k} - \frac{1}{2}\left( \frac{1-\beta}{2} -  \tau_{k-1}\|K\|\right) \|z^{k+1}-z^k\|^2  & \\
    &\quad- \frac{1}{2}(1-\tau_{k-1} \|K\|) \|u^{k+1}-u^k\|^2.
\end{aligned}
\end{equation}  
\end{lemma}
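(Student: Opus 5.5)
The proof will be a direct computation. First I expand the residual $\cR_k^{(z,u)}$ using the definitions \eqref{eq:c-graph_elements}--\eqref{eq:a-graph_elements}, and then apply the three-point identity $2\langle a-b, a-c\rangle = \|a-b\|^2+\|a-c\|^2-\|b-c\|^2$.

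\textbf{Step 1: substitute the definitions.} The dual part is
\[
\tau_{k-1}\langle u^{k+1}-u,\omega^{k+1}-Kz\rangle = \langle u^{k+1}-u, u^k-u^{k+1}\rangle + \tau_{k-1}\langle u^{k+1}-u, K(z^k-z)\rangle .
\]
The primal part $\tau_k\langle z^{k+1}-z,\nu^{k+1}+K^*u+B(z^{k+1})\rangle$ splits into four pieces:
\begin{itemize}
\item the proximal term $\langle z^{k+1}-z, z^k-z^{k+1}\rangle$;
\item the coupling terms $-\tau_k\langle K(z^{k+1}-z), u^{k+1}-u\rangle-\tau_{k-1}\langle K(z^{k+1}-z),u^{k+1}-u^k\rangle$;
\item the operator terms $\tau_k\langle z^{k+1}-z, B(z^{k+1})-B(z^k)\rangle - \tau_{k-1}\langle z^{k+1}-z, B(z^k)-B(z^{k-1})\rangle$.
\end{itemize}
The three-point identity turns the two proximal inner products into $\tfrac12(\|z^k-z\|^2-\|z^{k+1}-z\|^2-\|z^{k+1}-z^k\|^2)$ and its $u$-analogue.

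\textbf{Step 2: the bilinear $K$ terms.} I group them as follows:
\begin{align*}
&\tau_{k-1}\langle K(z^k-z),u^{k+1}-u\rangle - \tau_{k-1}\langle K(z^{k+1}-z),u^{k+1}-u^k\rangle - \tau_k\langle K(z^{k+1}-z),u^{k+1}-u\rangle\\
&= \tau_{k-1}\langle K(z^k-z),u^k-u\rangle - \tau_k\langle K(z^{k+1}-z),u^{k+1}-u\rangle - \tau_{k-1}\langle K(z^{k+1}-z^k),u^{k+1}-u^k\rangle .
\end{align*}
To verify this, add and subtract $\tau_{k-1}\langle K(z^k-z),u^k-u\rangle$ and use bilinearity. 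This telescopes into the $\cE$ terms with coefficients $\tau_{k-1}$ and $\tau_k$, exactly as in \eqref{eq:energy}. The leftover cross term is bounded by Young's inequality:
\[
\tau_{k-1}\|K\|\tfrac12\bigl(\|z^{k+1}-z^k\|^2+\|u^{k+1}-u^k\|^2\bigr).
\]
This produces the $\tau_{k-1}\|K\|$ coefficients in \eqref{eq:intermediate}. The sign conventions must be checked carefully here, since the dual step uses $\tau_{k-1}$.

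\textbf{Step 3: the $B$ terms.} The operator terms are split as
\[
-\tau_{k-1}\langle B(z^k)-B(z^{k-1}), z^{k+1}-z\rangle = -\tau_{k-1}\langle B(z^k)-B(z^{k-1}), z^k-z\rangle - \tau_{k-1}\langle B(z^k)-B(z^{k-1}), z^{k+1}-z^k\rangle .
\]
The first piece is the $B$-part of $\cV_k$. The term $+\tau_k\langle B(z^{k+1})-B(z^k), z^{k+1}-z\rangle$ moves to the left side, where it becomes the $B$-part of $\cV_{k+1}$.

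For the remaining cross term I use Cauchy--Schwarz, the linesearch bound \eqref{eq:linesearch} at iteration $k-1$ (valid for $k\ge1$; for $k=0$ it vanishes since $z^0=z^{-1}$), and Young's inequality:
\[
\tau_{k-1}\|B(z^k)-B(z^{k-1})\|\,\|z^{k+1}-z^k\| \le \tfrac{\sqrt\beta}{2}\|z^k-z^{k-1}\|\|z^{k+1}-z^k\| \le \tfrac14\|z^k-z^{k-1}\|^2+\tfrac{\beta}{4}\|z^{k+1}-z^k\|^2 .
\]
The $\tfrac14\|z^k-z^{k-1}\|^2$ is absorbed by the corresponding term in $\cV_k$. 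Adding $\tfrac14\|z^{k+1}-z^k\|^2$ to create $\cV_{k+1}$ costs $\tfrac14\|z^{k+1}-z^k\|^2$ from the $-\tfrac12\|z^{k+1}-z^k\|^2$ budget.

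\textbf{Step 4: collect coefficients.} The coefficient of $\|z^{k+1}-z^k\|^2$ on the right is
\[
-\tfrac12+\tfrac14+\tfrac\beta4+\tfrac{\tau_{k-1}\|K\|}{2} = -\tfrac12\bigl(\tfrac{1-\beta}{2}-\tau_{k-1}\|K\|\bigr),
\]
which matches \eqref{eq:intermediate}. The $u$-difference coefficient is $-\tfrac12(1-\tau_{k-1}\|K\|)$, also as claimed.

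I expect the main obstacle to be the bookkeeping in Step 2. The mismatch between $\tau_k$ and $\tau_{k-1}$ must cancel exactly into $\cE_{k+1}$ and $\cE_k$ without leaving a term of the form $(\tau_k-\tau_{k-1})\langle\cdot,\cdot\rangle$. I would verify this identity first, before anything else.
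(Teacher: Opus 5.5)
Your proposal is correct and follows the paper's proof essentially step for step. You expand $\cR_k^{(z,u)}$ via \eqref{eq:c-graph_elements}--\eqref{eq:a-graph_elements} with the three-point identity, regroup the bilinear $K$-terms into the $\tau_{k-1}$- and $\tau_k$-parts of $\cE_k$ and $\cE_{k+1}$ plus a Young-bounded cross term, split the reflected $B$-term and control it with the linesearch at iteration $k-1$, and finally add $\tfrac14\|z^{k+1}-z^k\|^2$ to both sides. Your remark that the case $k=0$ is trivial because $z^0=z^{-1}$ makes explicit a point the paper's proof of this lemma leaves implicit.
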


\begin{proof}
    From~\eqref{eq:a-graph_elements}, we have
    \begin{equation}\label{eq:Rk-1}
    \begin{aligned}
   \tau_k \bigl\langle z^{k+1}&-z, \nu^{k+1}+K^*u+B(z^{k+1})\bigr\rangle \\
    =&\,\langle z^{k+1}-z, z^k- z^{k+1} \rangle - \tau_k \langle z^{k+1}-z, K^* (u^{k+1}-u) \rangle  - \tau_{k-1} \langle  z^{k+1}-z,K^*( u^{k+1}- u^k)\rangle \\
&\quad  - \tau_k \langle z^{k+1}-z, B(z^k)-B(z^{k+1})\rangle - \tau_{k-1} \langle z^{k+1}-z, B(z^k)- B(z^{k-1}) \rangle,
\end{aligned}
\end{equation}
where the first term in the second line above can be rearranged as
\begin{equation*}
\langle z^{k+1}-z, z^k- z^{k+1} \rangle = \frac{1}{2}\bigl( \|z^k-z\|^2 - \|z^{k+1}-z\|^2  - \|z^{k+1}-z^k\|^2\bigr).
\end{equation*} 
Similarly, from~\eqref{eq:c-graph_elements}, 
\begin{equation}\label{eq:Rk-2}
\begin{aligned}
        \tau_{k-1}\bigl\langle &u^{k+1}-u, \omega^{k+1}-Kz\bigr\rangle \\& =  \,\langle  u^{k+1}-u, u^k-u^{k+1}+\tau_{k-1}K z^k  - \tau_{k-1}Kz\rangle \\
& = \frac{1}{2} \big( \|u^k-u\|^2 - \|u^{k+1}-u\|^2  - \|u^{k+1}-u^k\|^2 \big) + \tau_{k-1} \langle u^{k+1}-u, K(z^k -z) \rangle. \end{aligned}
\end{equation}
Adding~\eqref{eq:Rk-1}-\eqref{eq:Rk-2} and rearranging terms, we get
\begin{equation}\label{eq:cE1}\begin{aligned}
   \cR_k^{(z,u)}
   + \cE^{(z,u)}_{k+1} & =  \frac{1}{2} \bigl( \|z^{k}  -z\|^2 + \|u^k-u\|^2 \bigr) -  \frac{1}{2} \bigl( \|z^{k+1}-z^k\|^2 + \|u^{k+1}-u^k \|^2 \bigr) \\
& \quad +\tau_k \langle B(z^{k+1})-B(z^k), z^{k+1}-z\rangle - \tau_{k-1}\langle B(z^k)-B(z^{k-1}), z^{k+1}-z\rangle \\
& \quad + \tau_{k-1} \langle K(z^k-z), u^{k+1}-u \rangle - \tau_{k-1} \langle K(z^{k+1}-z), u^{k+1}-u^k \rangle.
\end{aligned}\end{equation}
The last two terms in~\eqref{eq:cE1} can be rearranged as
\begin{equation}\label{eq:lterms-aux}
\begin{aligned}
\tau_{k-1} & \langle K(z^k-z), u^{k+1}-u \rangle - \tau_{k-1} \langle K(z^{k+1}-z), u^{k+1}-u^k \rangle \\
& = \tau_{k-1} \langle K(z^k-z), u^k-u \rangle + \tau_{k-1} \langle K(z^k-z), u^{k+1}-u^k \rangle    - \tau_{k-1} \langle K(z^{k+1}-z), u^{k+1}-u^k \rangle  \\ 
& = \tau_{k-1} \langle K(z^k-z), u^k-u \rangle - \tau_{k-1} \langle K(z^{k+1}-z^k), u^{k+1}-u^k \rangle \\
& \leq \tau_{k-1} \langle K(z^k-z), u^k-u \rangle +  \frac{\tau_{k-1} }{2}\|K\|\bigl( \|z^{k+1}-z^k\|^2 + \|u^{k+1}-u^k\|^2\bigr),
\end{aligned}\end{equation}
where the last inequality is due to Cauchy--Schwarz and Young's inequalities. We now draw our attention to the second term in the second line of~\eqref{eq:cE1}. By adding  and subtracting $\tau_{k-1} \langle B(z^k) -B(z^{k-1}),z^{k} \rangle$, we get
\begin{equation}\label{eq:lsestimation-aux}
\begin{aligned}
 - \tau_{k-1} & \langle B(z^k) - B(z^{k-1}), z^{k+1}-z \rangle  \\
 = & - \tau_{k-1} \langle B(z^k) - B(z^{k-1}), z^{k}-z \rangle - \tau_{k-1} \langle B(z^k) - B(z^{k-1}), z^{k+1}-z^k \rangle \\
 \leq & - \tau_{k-1} \langle B(z^k) - B(z^{k-1}), z^{k}-z \rangle + \frac{1}{4} \bigl( \|z^k -z^{k-1}\|^2 + \beta\|z^{k+1}-z^k\|^2\bigr),
 \end{aligned}
\end{equation}
where the inequality is due to~\eqref{eq:linesearch} and follows similarly to \eqref{eq:lsestimation}. 
Combining equations~\eqref{eq:lterms-aux}-\eqref{eq:lsestimation-aux} and replacing in~(\ref{eq:cE1}),  we obtain
\[
\begin{aligned}
\cR^{(z,u)}_{k}+\cE^{(z,u)}_{k+1} - \tau_k&  \langle  B(z^{k+1}) -B(z^k),z^{k+1}-z \rangle  \\
& \leq   \frac{1}{2} \bigl( \|z^{k}  -z\|^2 + \|u^k-u\|^2 \bigr)   + \tau_{k-1} \langle K(z^k-z), u^k-u \rangle   + \frac{1}{4} \|z^k -z^{k-1}\|^2  \\
& \quad - \tau_{k-1} \langle B(z^k)-B(z^{k-1}),z^{k}-z \rangle - \frac{1}{2} (1- \tau_{k-1}\|K\|) \|u^{k+1}-u^k\|^2 \\
& \quad - \frac{1}{2} \left(1 - \frac{\beta}{2} - \tau_{k-1}\|K\| \right) \|z^{k+1}-z^k\|^2.
\end{aligned}
\]
Finally, adding $\|z^{k+1}-z^k\|^2/4$ to both sides of the equation above we obtain~\eqref{eq:intermediate} by noting that the first three terms in the right-hand side define $\cV^{(z,u)}_k$.
\end{proof}

\begin{proposition}\label{prop:descent_inequality}
Suppose that the assumptions of Problem~\ref{problem:mi} hold.  Let $(z^*,u^*)\in\Hilbert\times\G$ be a solution of the primal-dual inclusion~(\ref{eq:pdincl}). Let   $(z^k)$ and $(u^k)$ be the sequences generated by Algorithm~\ref{alg:pdls}. The following assertions hold:
\begin{enumerate}[label=(\roman*)]
\item\label{it:di-1} for all $k\in\N$, it holds
\begin{equation}\label{eq:cE}
\begin{aligned}
\cV^{(z^*,u^*)}_{k+1} &\leq \cV^{(z^*,u^*)}_k  - \frac{1}{2}\left( \frac{1-\beta}{2} -  \tau_{k-1}\|K\|\right) \|z^{k+1}-z^k\|^2 \\
& \quad- \frac{1}{2}(1-\tau_{k-1} \|K\|) \|u^{k+1}-u^k\|^2;
\end{aligned}
\end{equation}
\item\label{it:di-2} the sequences $(\|z^{k+1}-z^k\|)$ and  $(\|u^{k+1}-u^k\|)$ converge to $0$;
\item\label{it:di-3} the sequences $(z^k)$ and $(u^k)$ are bounded.
\end{enumerate}
\end{proposition}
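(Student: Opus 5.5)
The plan is to apply Lemma~\ref{l:intermediate} with $(z,u)=(z^*,u^*)$ and show that the residual $\cR_k^{(z^*,u^*)}$ is nonnegative; this gives \ref{it:di-1} directly. Since $(z^*,u^*)$ solves~\eqref{eq:pdincl}, we have $Kz^*\in C^{-1}(u^*)$ and $-K^*u^*-B(z^*)\in A(z^*)$. We already know $\omega^{k+1}\in C^{-1}(u^{k+1})$ and $\nu^{k+1}\in A(z^{k+1})$.

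Monotonicity of $C^{-1}$ then gives
\[
\langle u^{k+1}-u^*,\,\omega^{k+1}-Kz^*\rangle\geq 0 .
\]
Monotonicity of $A$ gives
\[
\langle z^{k+1}-z^*,\,\nu^{k+1}+K^*u^*+B(z^*)\rangle\geq 0 .
\]
Writing $\nu^{k+1}+K^*u^*+B(z^{k+1})=\bigl(\nu^{k+1}+K^*u^*+B(z^*)\bigr)+\bigl(B(z^{k+1})-B(z^*)\bigr)$, the remaining cross term $\langle z^{k+1}-z^*,B(z^{k+1})-B(z^*)\rangle$ is nonnegative by monotonicity of $B$ on $\dom A$. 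This applies because $z^{k+1},z^*\in\dom A$: $z^{k+1}$ is a resolvent output, and $z^*\in\dom A$ because $A(z^*)\neq\emptyset$. Since $\tau_k,\tau_{k-1}>0$, we get $\cR_k^{(z^*,u^*)}\geq 0$, and~\eqref{eq:cE} follows.

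For \ref{it:di-2}, I would first check that the coefficients in~\eqref{eq:cE} are uniformly positive. Because $\tau_{k-1}\|K\|<\delta(1-\beta)/2$, we have
\[
\tfrac{1-\beta}{2}-\tau_{k-1}\|K\|\geq (1-\delta)\tfrac{1-\beta}{2}>0
\quad\text{and}\quad
1-\tau_{k-1}\|K\|>1-\tfrac{1-\beta}{2}>0 .
\]
By Lemma~\ref{lemma:cVnonnegative}, $\cV_k^{(z^*,u^*)}\geq 0$. Telescoping~\eqref{eq:cE} from $0$ to $N$ then bounds $\sum_k\bigl(\|z^{k+1}-z^k\|^2+\|u^{k+1}-u^k\|^2\bigr)$ by a constant times $\cV_0^{(z^*,u^*)}<\infty$. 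Hence the summands are summable, and in particular $\|z^{k+1}-z^k\|\to0$ and $\|u^{k+1}-u^k\|\to0$.

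For \ref{it:di-3}, \ref{it:di-1} shows that $\cV_k^{(z^*,u^*)}\leq\cV_0^{(z^*,u^*)}$ for all $k$. Combining this with the lower bound~\eqref{eq:Vlower}, the coefficients there are again uniformly positive:
\[
1-\tfrac{\beta}{2}-\tau_{k-1}\|K\|\geq 1-\tfrac{\beta}{2}-\tfrac{1-\beta}{2}=\tfrac12 ,
\qquad
1-\tau_{k-1}\|K\|\geq\tfrac12 .
\]
So $\|z^k-z^*\|^2+\|u^k-u^*\|^2\leq 4\,\cV_0^{(z^*,u^*)}$, which proves boundedness.

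The main point needing care is the sign of the residual, specifically the use of monotonicity of $B$ only on $\dom A$. Everything else is bookkeeping of the uniform lower bounds on the coefficients, which the stepsize cap $\delta(1-\beta)/(2\|K\|)$ guarantees.
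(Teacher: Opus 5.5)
Your proposal is correct and follows the paper's proof step for step. You show the residual at $(z^*,u^*)$ is nonnegative via monotonicity of $A$, $B$ and $C^{-1}$ and feed this into Lemma~\ref{l:intermediate}; you then telescope using Lemma~\ref{lemma:cVnonnegative} and the stepsize cap for (ii), and use the lower bound~\eqref{eq:Vlower} with coefficients at least $1/2$ for (iii). Your explicit check that $z^{k+1},z^*\in\dom A$, so that monotonicity of $B$ on $\dom A$ applies, is a detail the paper leaves implicit.
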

\begin{proof}
\ref{it:di-1} By Lemma~\ref{l:intermediate} it suffices to show that the residual~\eqref{eq:residual} is nonnegative at $(z,u)=(z^*,u^*)$. Indeed, from \eqref{eq:pdincl}, $c^*:=Kz^*\in C^{-1}(u^*)$ and $a^*:=-K^*u^*-B(z^*)\in A(z^*)$, and thus
{\small
\[
\begin{aligned}
\cR_k^{(z^*,u^*)} & = \tau_k \bigl\langle z^{k+1}-z^*, \, \nu^{k+1} - a^* + B(z^{k+1}) - B(z^*) \bigr\rangle + \tau_{k-1} \bigl\langle u^{k+1}-u^*, \, \omega^{k+1} - c^* \bigr\rangle \\
& = \tau_k \bigl\langle z^{k+1}-z^*, \nu^{k+1} - a^*\bigr\rangle + \tau_k \bigl\langle z^{k+1}-z^*, B(z^{k+1})-B(z^*)\bigr\rangle + \tau_{k-1} \bigl\langle u^{k+1}-u^*, \omega^{k+1}-c^*\bigr\rangle \\
& \geq 0,
\end{aligned}
\]}%
where the inequality follows from the monotonicity of $A$, of $B$, and of $C^{-1}$, by recalling that $\omega^{k+1}\in\gra C^{-1}(u^{k+1})$ and $\nu^{k+1}\in\gra{A}(z^{k+1}
)$, for all $k\in\N$.

\ref{it:di-2} Let $N\in\N{\setminus}\{0\}$. By summing~(\ref{eq:cE}) from $k=0$ to $N$, telescoping and rearranging we get
\begin{equation}\label{eq:tel-1}
\begin{aligned}
 \cV^{(z^*,u^*)}_{0} & \geq \cV^{(z^*,u^*)}_{0}- \cV^{(z^*,u^*)}_{N+1}  \\ 
 & \geq \frac{1}{2}\sum_{k=0}^N \left(\frac{1-\beta}{2} -\tau_{k-1}\|K\|\right) \|z^{k+1}-z^k\|^2 +\frac{1}{2}\sum_{k=0}^N   (1- \tau_{k-1}\|K\|) \|u^{k+1}-u^k\|^2,
\end{aligned}
\end{equation}
where the first inequality is due to Lemma~\ref{lemma:cVnonnegative}. Hence, the sums in the right-hand side of~(\ref{eq:tel-1}) are finite when $N\to\infty$. Furthermore, since $1-\tau_{k-1}\|K\| > (1-\beta)/2 - \tau_{k-1}\|K\| > (1-\delta) (1-\beta)/2 >0$, for all $k\in\N$,  we conclude that $(\|z^{k+1}-z^k\|)$ and $(\|u^{k+1}-u^k\|)$ converge to $0$ as $k\to\infty$.

\ref{it:di-3} By combining~\eqref{eq:tel-1} and~\eqref{eq:Vlower}, there exists a constant $R>0$ such that, for all $N\in\N{\setminus}\{0\}$,
\begin{equation}\label{eq:bound-3}
\begin{aligned}
R &\geq \cV_{N+1}^{(z^*,u^*)} \\
& \geq
\frac{1}{2}\left(1- \frac{\beta}{2} - \tau_{N}\|K\|\right) \|z^{N+1}-z^*\|^2 + \frac{1}{2} \left(1- \tau_{N}\|K\|\right) \|u^{N+1}-u^*\|^2.
\end{aligned}
\end{equation}
Since $\tau_{N}<  \delta (1-\beta)/(2\|K\|)$, we have  $1-\tau_{N}\|K\|  > 1-\beta/2- \tau_{N}\|K\| > 1/2 + (1-\delta)(1-\beta)/2 >1/2$. This leads to the bound
\begin{equation}\label{eq:bound-4}
\frac{1}{4}\|z^{N+1}-z^*\|^2 + \frac{1}{4}\|u^{N+1}-u^*\|^2 \leq R, \quad \text{for all } N\in\N{\setminus}\{0\}.
\end{equation}
Thus proving that the sequences $(z^k)$ and $(u^k)$ are bounded.
\end{proof}


For the convergence result of Algorithm~\ref{alg:pdls} we need the following assumption on the stepsize sequences $(\tau_k)$. Later, in Remark~\ref{remark:backtracking}, we will show that the assumption holds when a backtracking procedure is used for selecting $\tau_k$. 

\begin{assumption}\label{ass:tauk}
The sequence $(\tau_k)$ generated by the linesearch in Algorithm~\ref{alg:pdls} is bounded away from $0$ whenever the sequence $(z^k,u^k)$ is bounded.
\end{assumption}

We are  now in a position to state the convergence result of Algorithm~\ref{alg:pdls}.

\begin{theorem}\label{th:conv}
Suppose that the assumptions of Problem~\ref{problem:mi} hold. In addition, suppose Assumption~\ref{ass:tauk} holds. Suppose there exists at least one solution of the monotone inclusion~(\ref{eq:monotone_inclusion}). Then the sequence $(z^k,u^k)$ generated by Algorithm~\ref{alg:pdls} converges to a primal-dual  solution of~(\ref{eq:monotone_inclusion}).
\end{theorem}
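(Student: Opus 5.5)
This is a standard Opial-type argument built on Proposition~\ref{prop:descent_inequality}. A solution of~(\ref{eq:monotone_inclusion}) exists, so a primal-dual solution $(z^*,u^*)$ of~(\ref{eq:pdincl}) exists. Taking $u^*\in C(Kz^*)$ with $-K^*u^*\in (A+B)(z^*)$ gives exactly such a pair. By Proposition~\ref{prop:descent_inequality}\ref{it:di-3}, the sequence $(z^k,u^k)$ is bounded. Assumption~\ref{ass:tauk} then yields $\underline{\tau}>0$ with $\tau_k\geq\underline{\tau}$ for all $k$, and by construction $\tau_k<\delta(1-\beta)/(2\|K\|)$.

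\textbf{Cluster points are solutions.} Let $(z^{k_j},u^{k_j})\to(\bar z,\bar u)$ be a convergent subsequence; I would take the shifted indices $k_j+1$. By Proposition~\ref{prop:descent_inequality}\ref{it:di-2}, $z^{k_j+1}\to\bar z$ and $u^{k_j+1}\to\bar u$ as well. The elements $\omega^{k+1}\in C^{-1}(u^{k+1})$ and $\nu^{k+1}\in A(z^{k+1})$ from~\eqref{eq:c-graph_elements}--\eqref{eq:a-graph_elements} satisfy
\[
\omega^{k_j+1}-Kz^{k_j+1}\to 0,\qquad \nu^{k_j+1}+K^*u^{k_j+1}+B(z^{k_j+1})\to 0 .
\]
This holds because every difference term is divided by $\tau_{k-1}$ or $\tau_k\geq\underline{\tau}$, and the ratio $\tau_{k-1}/\tau_k$ is bounded by $\delta(1-\beta)/(2\|K\|\underline{\tau})$. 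For the $B$-terms, $\|B(z^k)-B(z^{k-1})\|\to0$ because $B$ is Lipschitz on a compact convex subset of $\overline{\dom}\,A$ containing the bounded iterates; alternatively one can use~\eqref{eq:linesearch} directly. Hence
\[
\bigl(u^{k_j+1},z^{k_j+1}\bigr)\to(\bar u,\bar z),
\]
with elements of $(\mathcal A+\mathcal B)(u^{k_j+1},z^{k_j+1})$ tending to $0$. The operator $\mathcal{B}$ is single-valued and continuous at the relevant points, so we may move it to the other side. Since $\gra \mathcal A$ is closed (maximal monotonicity of $A$ and $C^{-1}$) and $B$ is continuous on $\overline{\dom}\,A$, it follows that $(\bar z,\bar u)$ solves~(\ref{eq:pdincl}). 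Here $\bar z\in\dom A$ because the limit lies in the graph of $A$.

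\textbf{Uniqueness of the cluster point.} This is the main obstacle, because the Lyapunov function $\cV^{(z,u)}_k$ is not a plain squared distance: it contains the $\tau_{k-1}$-weighted cross term and the $B$-term. I would use a Opial-type argument. For any primal-dual solution $(z,u)$, Proposition~\ref{prop:descent_inequality}\ref{it:di-1} (applied with $(z,u)$ in place of $(z^*,u^*)$) shows that $\cV^{(z,u)}_k$ is nonincreasing and nonnegative, hence convergent. The term $\tau_{k-1}\langle B(z^k)-B(z^{k-1}),z^k-z\rangle$ and the term $\frac14\|z^k-z^{k-1}\|^2$ both tend to $0$, so $\cE^{(z,u)}_k$ converges. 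The step sizes $\tau_{k-1}$ need not converge, so I would not expand $\cE$ for a single solution. Instead, for two cluster points $(\bar z_1,\bar u_1)$ and $(\bar z_2,\bar u_2)$, consider the difference $\cE^{(\bar z_1,\bar u_1)}_k-\cE^{(\bar z_2,\bar u_2)}_k$. The quadratic parts produce terms linear in $(z^k,u^k)$ plus constants. The cross terms produce
\[
\tau_{k-1}\bigl[\langle K(z^k-\bar z_1),u^k-\bar u_1\rangle-\langle K(z^k-\bar z_2),u^k-\bar u_2\rangle\bigr].
\]
This is where the nonconvergent $\tau_{k-1}$ causes trouble. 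The remedy I would try is to pass to a further subsequence along which $\tau_{k-1}\to\bar\tau_i$ as well, evaluate the limit of the difference along the subsequences converging to each cluster point, and equate. This gives
\[
\tfrac12\|\bar z_1-\bar z_2\|^2+\tfrac12\|\bar u_1-\bar u_2\|^2+\bar\tau\langle K(\bar z_1-\bar z_2),\bar u_1-\bar u_2\rangle=0
\]
after combining the two equalities, with $\bar\tau$ a convex-type combination of the limiting step sizes. Since $\bar\tau\|K\|<1$, this form is positive definite, so $(\bar z_1,\bar u_1)=(\bar z_2,\bar u_2)$.

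If the bookkeeping with two different limits $\bar\tau_1,\bar\tau_2$ fails to combine cleanly, my fallback is to rewrite the cross term as $\tau_{k-1}\langle K z^k,u^k\rangle$ plus terms linear in the iterates. The quadratic piece $\tau_{k-1}\langle Kz^k,u^k\rangle$ cancels exactly in the difference of the two energies. What remains is terms linear in $(z^k,u^k)$ with coefficients $\tau_{k-1}$, and these can be handled by the same subsequence limiting argument. Once uniqueness holds, boundedness gives convergence of the whole sequence to a primal-dual solution.
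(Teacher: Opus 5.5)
Your proof is correct. The first half (cluster points are primal-dual solutions, using the graph elements $\omega^{k+1},\nu^{k+1}$, the two-sided bounds on $\tau_k$, closedness of $\gra C^{-1}$ and $\gra A$, and continuity of $B$) is essentially the paper's argument in different packaging. You diverge from the paper in how you get convergence of the whole sequence.

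\textbf{The paper's route.} The paper never compares two cluster points. It uses the cluster point $(\tilde z,\tilde u)$ itself as the reference in the Lyapunov function. Since $(\tilde z,\tilde u)$ is a primal-dual solution, $\cV_k^{(\tilde z,\tilde u)}$ is nonincreasing and nonnegative, hence convergent, and its limit equals $\lim_k \cE_k^{(\tilde z,\tilde u)}$. Along the subsequence one has $\cE_{k_j}^{(\tilde z,\tilde u)}\to 0$, because the cross term is the bounded factor $\tau_{k_j-1}$ times $\langle K(z^{k_j}-\tilde z),u^{k_j}-\tilde u\rangle\to 0$. So the limit is $0$, and the coercive lower bound of Lemma~\ref{lemma:cVnonnegative} gives $\tfrac14\|z^k-\tilde z\|^2+\tfrac14\|u^k-\tilde u\|^2\to 0$. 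In particular, the non-convergence of $\tau_{k-1}$, which you flag as the main obstacle, is harmless there.

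\textbf{Your route closes as sketched.} Write $\Delta_z=\bar z_1-\bar z_2$, $\Delta_u=\bar u_1-\bar u_2$ and $Q_\tau=\tfrac12\|\Delta_z\|^2+\tfrac12\|\Delta_u\|^2+\tau\langle K\Delta_z,\Delta_u\rangle$. The difference $\cE_k^{(\bar z_1,\bar u_1)}-\cE_k^{(\bar z_2,\bar u_2)}$ tends to $-Q_{\bar\tau_1}$ along the first subsequence and to $+Q_{\bar\tau_2}$ along the second. Hence $Q_{\bar\tau_1}+Q_{\bar\tau_2}=2Q_{(\bar\tau_1+\bar\tau_2)/2}=0$. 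This forces $\Delta_z=0$ and $\Delta_u=0$, since $\tfrac12(\bar\tau_1+\bar\tau_2)\|K\|\le\delta(1-\beta)/2<1$. Your primary bookkeeping with two different limits $\bar\tau_1,\bar\tau_2$ therefore combines cleanly, and the fallback is not needed.

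\textbf{Trade-off.} The paper's argument is shorter and exploits strong convergence of a subsequence, which is free in finite dimensions. Yours only needs the energy difference to be affine in the iterates with convergent coefficients. That is the Opial structure that would survive with weak cluster points in infinite dimensions, though there the cluster-point step would need a separate demiclosedness argument. It costs extra subsequence extraction and bookkeeping.
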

\begin{proof}
By Proposition~\ref{prop:descent_inequality}\ref{it:di-3}, the sequence $(z^k,u^k)$ is bounded. Let $(z^{k_j},u^{k_j})$ be a subsequence of $(z^k,u^k)$ converging to some accumulation point $(\tilde{z},\tilde{u})$. In what follows we prove that $(\tilde{z},\tilde{u})$ is a primal-dual solution of~(\ref{eq:monotone_inclusion}).

Indeed,   the iteration defined by~(\ref{eq:dual_update})-(\ref{eq:primal_update}) along this subsequence can be expressed in matrix form as
\begin{equation}\label{eq:it_mf}
\underbrace{\left[
\begin{array}{cc}
C^{-1} &  -K \\
K^* & A+B
\end{array}
\right]}_{M:=} \left[
\begin{array}{c}
u^{k_j+1} \\
z^{k_j+1}
\end{array}
\right]
\ni
\left[
\begin{array}{l}
\frac{u^{k_j}-u^{k_j+1}}{\tau_{k_j-1}} - K(z^{k_j+1}-z^{k_j})  \\
\frac{z^{k_j}-z^{k_j+1}}{\tau_{k_j}} - \frac{\tau_{k_j-1}}{\tau_{k_j}} K^*(u^{k_j+1}-u^{k_j})-b_{k_j} + \frac{\tau_{k_j-1}}{\tau_{k_j}} b_{k_j-1}
\end{array}
\right],
\end{equation}
where $b_{\ell}:=B(z^{\ell})-B(z^{\ell+1})$. 
Observe that the operator $M$ can be decomposed as the sum of the Cartesian product $C^{-1} \times A$, which is maximally monotone,  with a continuous operator on $\G\times\overline{\dom}\, A$. Since both the graphs of continuous operators and  maximally monotone operators are closed (see, e.g.,~\cite[Proposition~20.38]{bauschke2017convex}) the graph of $M$ is closed on $\G\times\overline{\dom}\, A$.  
By Proposition~\ref{prop:descent_inequality}\ref{it:di-3}, the sequence $(z^k,u^k)$ is bounded, and Assumption~\ref{ass:tauk} yields $\inf_{k\in\N} \tau_k >0$. Together with the uniform upper bound on $(\tau_k)$ on Step~2 of Algorithm~\ref{alg:pdls}, this guarantees that the quotients $1/\tau_{k_j-1}$, $1/\tau_{k_j}$ and $\tau_{k_j-1}/\tau_{k_j}$ in~\eqref{eq:it_mf} remain bounded. Combining all these with Proposition~\ref{prop:descent_inequality}\ref{it:di-2} and recalling the continuity of $B$, by passing to the limit in~(\ref{eq:it_mf}) as $j\to\infty$ the closedness of the graph of $M$ yields 
\[
\left[
\begin{array}{cc}
C^{-1} &  -K \\
K^* & A+B
\end{array}
\right]\left[
\begin{array}{c}
\tilde{u} \\
\tilde{z}
\end{array}
\right]
\ni
\left[
\begin{array}{c}
0 \\
0
\end{array}
\right],
\]
which proves that $(\tilde{z},\tilde{u})$ is a primal-dual solution of~(\ref{eq:monotone_inclusion}).

By Proposition~\ref{prop:descent_inequality}\ref{it:di-1} the sequence $\bigl(\mathcal{V}^{(\tilde{z},\tilde{u})}_k\bigr)$ is nonincreasing and nonnegative due to Lemma~\ref{lemma:cVnonnegative}. Hence, the limit $\lim_{k\to\infty} \mathcal{V}^{(\tilde{z},\tilde{u})}_k$  exists, and since $z^{k}-z^{k-1}\to0$, we have
\[
\begin{aligned}
\lim_{k\to\infty} \mathcal{V}^{(\tilde{z},\tilde{u})}_k  & = \lim_{k\to\infty} \cE^{(\tilde{z},\tilde{u})}_k \\
&  = \lim_{j\to\infty} \cE^{(\tilde{z},\tilde{u})}_{k_j} \\
& =\lim_{j\to\infty}  \frac{1}{2}\big( \|z^{k_j}-\tilde{z} \|^2 + \|u^{k_j}-\tilde{u}\|^2\big) + \tau_{k_j-1} \langle K(z^{k_j}-\tilde{z}), u^{k_j}-\tilde{u} \rangle) =  0.
\end{aligned}
\]
Finally, using the same type of estimation as in~(\ref{eq:bound-3})-(\ref{eq:bound-4}), we  obtain
\[
\lim_{k\to\infty} \cV^{(\tilde{z},\tilde{u})}_k \geq \lim_{k\to\infty} \frac{1}{4} \|z^k -\tilde{z}\|^2 + \frac{1}{4}\|u^k-\tilde{u}\|^2 =0,
\]
which proves that $(z^k,u^k)$ converge to the primal-dual solution $(\tilde{z},\tilde{u})$.
\end{proof}

\begin{remark}[Backtracking linesearch]\label{remark:backtracking}
	The linesearch in Step~2 of Algorithm~\ref{alg:pdls} can be conducted by a backtracking procedure. For instance, for  all $k\in\N$, we can take $\alpha\,{\in}\,{(0,1)}$,  choose a fixed initial trial stepsize  $\bar{\tau} \in \bigl(0,\delta (1-\beta)/(2\|K\|)\bigr)$, and select $\tau_k$ as $\tau_k =\alpha^j \bar{\tau}$ with $j$ being the smallest  integer such that $\tau_k$ satisfies~(\ref{eq:linesearch}). In this case, it is easy to check that Assumption~\ref{ass:tauk} holds. To see this, let us first observe that the set of trial points corresponding to $\tau\in{(0,\bar{\tau}]}$ is bounded for all $k\in\N$ whenever the sequences $(z^k)$ and $(u^k)$  are bounded (which in practice holds thanks to Proposition~\ref{prop:descent_inequality}\ref{it:di-3}). That is, we study the boundedness of the set $Z:=\{ z^{k+1}(\tau) \,: \, \tau\in{(0,\bar{\tau}]}, \, k\in\N \}\subseteq \dom A$, where $z^{k+1}(\tau)$ is defined as in~\eqref{eq:zk1tau}. For all $\tau\in{(0,\bar{\tau}]}$, we obtain the estimation
	\begin{equation}\label{eq:remarkback-1}
		\begin{aligned}
		\bigl\|z^{k+1}(\tau) \bigr\| & \leq  \bigl\|z^{k+1}(\tau) - J_{\tau A} \big(z^{k+1}(0)\big)\bigr\| +  \bigl\|  J_{\tau A} \big(z^{k+1}(0)\big) \bigr\| \\
		& \leq    \bar{\tau} \bigl\|  K^*u^{k+1}+ B(z^k) \bigr\|  + \bigl\|  J_{\tau A} \big(z^{k+1}(0)\big) \bigr\|, 
		\end{aligned}
	\end{equation}
	for all $k\in\N$. Now, let $(a,a^*)\in\gra A$. This in turn implies that $a = J_{\tau A} (a+\tau a^*)$ for all $\tau >0$. Hence, the nonexpansivity of the resolvent and the triangle inequality yield
	\begin{equation}\label{eq:remarkback-2}
	\begin{aligned}
	 \bigl\|  J_{\tau A} \big(z^{k+1}(0)\big) \bigr\| & \leq \bigl\|  J_{\tau A} \big(z^{k+1}(0)\big) - J_{\tau A} \big(a+\tau a^*\big) \bigr\| + \bigl\|  J_{\tau A} \big(a+\tau a^*\big) \bigr\| \\
	 & \leq  \| z^{k+1}(0) - a\| + \bar{\tau} \|a^*\| + \|a\|, 
	\end{aligned}
	\end{equation}
	for all $\tau\in{(0,\bar{\tau}]}$ and all $k\in\N$. Since $(z^k)$ and $(u^k)$ are bounded, we conclude that the sequence $\bigl(z^{k+1}(0)\bigr)$ is also bounded (recall~\eqref{eq:zk10}), which together with \eqref{eq:remarkback-1}-\eqref{eq:remarkback-2} ensures that the set $Z$ is bounded. Hence, we can take a compact convex set $U\subseteq\overline{\dom}\,A$ containing $\{z^k\} \cup Z$ such that $B$ is Lipschitz continuous on $U$ with Lipschitz constant $L\geq0$.  Thus, for all $k\in\N$, we can conclude that the inequality
	\[
	\tau_k \| B\bigl(z^{k+1}(\tau_k)\bigr) - B\bigl(z^k\bigr)\| \leq \frac{\sqrt{\beta}}{2}\|z^{k+1}(\tau_k) - z^k\|,
	\]
	holds for some $\tau_k \in{[\alpha^{\ell} \bar{\tau},\bar{\tau}]}$, with $\ell$ being the smallest integer such that  $\alpha^{\ell}\bar{\tau} \leq \sqrt{\beta}/(2L)$.
\end{remark}

 The  previous remark leads to the following corollary of Theorem~\ref{th:conv}.
 
 \begin{corollary}
 Suppose that the assumptions of Problem~\ref{problem:mi} hold.  Suppose there exists at least one solution of the monotone inclusion~(\ref{eq:monotone_inclusion}). Let  $(z^k,u^k)$ be the sequence  generated by Algorithm~\ref{alg:pdls} where the linesearch is performed following a backtracking procedure as detailed in Remark~\ref{remark:backtracking}. Then $(z^k,u^k)$ converges to a primal-dual  solution of~(\ref{eq:monotone_inclusion}). 
 \end{corollary}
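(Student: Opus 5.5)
The corollary will follow from Theorem~\ref{th:conv} once two things are verified for the backtracking rule of Remark~\ref{remark:backtracking}: the rule is a legitimate instance of Step~2 of Algorithm~\ref{alg:pdls}, and it satisfies Assumption~\ref{ass:tauk}.

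\emph{Well-definedness.} At each iteration $k$, Lemma~\ref{lemma:lswd} provides $\zeta_k>0$ such that every $\tau\in(0,\zeta_k)$ satisfies~\eqref{eq:linesearch} at the trial point $z^{k+1}(\tau)$. Since $\alpha^j\bar\tau\downarrow 0$, some finite $j$ gives $\alpha^j\bar\tau<\zeta_k$, so the smallest admissible $j$ exists. The accepted stepsize satisfies $\tau_k\le\bar\tau<\delta(1-\beta)/(2\|K\|)$. Therefore the backtracking produces iterates of Algorithm~\ref{alg:pdls}, and Proposition~\ref{prop:descent_inequality} applies. Under the solvability assumption, part~\ref{it:di-3} of that proposition shows that $(z^k,u^k)$ is bounded.

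\emph{Assumption~\ref{ass:tauk}.} Assume $(z^k,u^k)$ is bounded. Following Remark~\ref{remark:backtracking}, I would bound the set $Z$ of all trial points $z^{k+1}(\tau)$ with $\tau\in(0,\bar\tau]$ and $k\in\N$. By~\eqref{eq:remarkback-1}--\eqref{eq:remarkback-2}, together with boundedness of $z^{k+1}(0)$ from~\eqref{eq:zk10}, $Z$ is bounded. The main subtlety is that~\eqref{eq:zk10} involves $\tau_{k-1}\le\bar\tau$ and $B(z^k)-B(z^{k-1})$; these stay bounded because $B$ is continuous and the $z^k$ lie in a bounded set.

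Take the closed convex hull $U$ of $\{z^k\}\cup Z$. It is compact, and it lies in $\overline{\dom}\,A$ because $\overline{\dom}\,A$ is convex (the closure of the domain of a maximally monotone operator in finite dimensions is convex). Local Lipschitz continuity of $B$ on $\overline{\dom}\,A$ together with compactness gives a constant $L$ such that $B$ is $L$-Lipschitz on $U$. Every $\tau\le\sqrt\beta/(2L)$ then satisfies~\eqref{eq:linesearch}, since both $z^k$ and $z^{k+1}(\tau)$ lie in $U$. Because backtracking accepts the first admissible trial, $\tau_k\ge\min\{\bar\tau,\alpha\sqrt\beta/(2L)\}>0$ for all $k$. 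Hence $\inf_k\tau_k>0$.

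With both hypotheses in place, Theorem~\ref{th:conv} yields convergence of $(z^k,u^k)$ to a primal-dual solution of~\eqref{eq:monotone_inclusion}.
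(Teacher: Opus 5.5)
Your proposal is correct and follows essentially the same route as the paper: the corollary is obtained from Theorem~\ref{th:conv} by using the argument of Remark~\ref{remark:backtracking} to show that, once $(z^k,u^k)$ is bounded, all backtracking trial points stay in a bounded subset of $\overline{\dom}\,A$ on which $B$ is Lipschitz, so $\inf_k\tau_k>0$. Your additions, namely the explicit appeal to Lemma~\ref{lemma:lswd} for well-definedness, the justification via convexity of $\overline{\dom}\,A$ that the closed convex hull is an admissible set $U$, and the explicit bound $\tau_k\geq\min\{\bar\tau,\alpha\sqrt{\beta}/(2L)\}$, are details the paper leaves implicit.
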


\subsection{Ergodic convergence rates}

In this section, we focus on the consensus min-max problems~\eqref{eq:minimax_pd0}.  
In particular, we obtain ergodic convergence rates for a gap function associated with this problem. 
We start by recalling the primal variable $\bz:=(\bx,\by)\in\R^{n\times p}\times\R^{n\times q}$ of~\eqref{eq:mm_as_mi}. In addition, we will use  $\bu\in\R^{n\times p}\times\R^{n\times q}$ for the dual variable for this problem. In accordance with these variables, we will also employ boldface for the sequence $(\bz^k,\bu^k)$ of primal-dual variables generated by Algorithm~\ref{alg:pdls} applied to~\eqref{eq:mm_as_mi}.

The following notation will be used throughout this section. For $\bz:=(\bx,\by)\in\R^{n\times p}\times\R^{n\times q}$, we define
\begin{equation*}
    h(\bz) := f(\bx) + g(\by) \quad\text{and}\quad\Phi(\bx,\by) := f(\bx) + \phi(\bx,\by) - g(\by),
\end{equation*}
where $f,g$ and $\phi$ are defined as in~\eqref{eq:fgphi}. Observe that $\partial h = A$, where $A$ is the maximally monotone operator defined in $\eqref{eq:opreform}$, while $\Phi$ is the saddle function in~\eqref{eq:minimax_cons}.
Given a reference point $\tilde{\bz}:=(\tilde{\bx},\tilde{\by})\in\mathcal{D}_{\R^{n\times p}}\times\mathcal{D}_{\R^{n\times q}}$ and a vector $\tilde{\bu}\in\R^{n\times p}\times\R^{n\times q}$, we consider the \emph{consensus-penalised gap function} \begin{equation}\label{eq:gapfunction}
\mathcal{G}(\bz;\tilde{\bz},\tilde{\bu}) := \Phi(\bx,\tilde{\by}) - \Phi(\tilde{\bx},\by) + \langle K\bz, \tilde{\bu}\rangle, \quad \bz:=(\bx,\by)\in\R^{n\times p}\times\R^{n\times q}.
\end{equation} 
The following lemma shows that the consensus-penalised gap is nonnegative when the reference points are given by a primal-dual solution of~\eqref{eq:mm_as_mi}. The proof is included for completeness.

\begin{proposition}\label{prop:cGnonnegative}
Let $(\bz^*,\bu^*)$ be a primal-dual solution in the sense of~\eqref{eq:pdincl} for the monotone inclusion problem~\eqref{eq:mm_as_mi}. Then 
\begin{equation*}
    \mathcal{G}(\bz;\bz^*,\bu^*)  \geq 0, \quad \text{for all } \bz=(\bx,\by)\in\dom h.  
\end{equation*}
\end{proposition}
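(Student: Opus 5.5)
Write the gap as the sum of two terms, each nonnegative by a convexity inequality taken at the saddle point. First unpack the primal-dual inclusion~\eqref{eq:pdincl} for~\eqref{eq:mm_as_mi}, where $C=N_{\{0\}}$ so that $C^{-1}=N_{\{0\}}^{-1}$ is the zero operator on $\G$ with $\dom N_{\{0\}}=\{0\}$. The first row gives $K\bz^*\in C^{-1}(\bu^*)$, i.e.\ $\bu^*\in N_{\{0\}}(K\bz^*)$, hence $K\bz^*=0$. Since $K_1,K_2$ enforce consensus, $\bz^*\in\mathcal{D}_{\R^{n\times p}}\times\mathcal{D}_{\R^{n\times q}}$, so $\bz^*$ is an admissible reference point. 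The second row gives
\[
-K^*\bu^*-B(\bz^*)\in A(\bz^*).
\]
Writing $\bu^*=(\bu^*_1,\bu^*_2)$ this splits into $-K_1^*\bu_1^*-\nabla_\bx\phi(\bx^*,\by^*)\in\partial f(\bx^*)$ and $-K_2^*\bu_2^*+\nabla_\by\phi(\bx^*,\by^*)\in\partial g(\by^*)$. In particular $\bx^*\in\dom f$ and $\by^*\in\dom g$.

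Now fix $\bz=(\bx,\by)\in\dom h$. Use $\langle K\bz,\bu^*\rangle=\langle \bx,K_1^*\bu_1^*\rangle+\langle\by,K_2^*\bu_2^*\rangle$ together with $K\bz^*=0$, which gives $\langle \bx^*,K_1^*\bu_1^*\rangle=\langle \by^*,K_2^*\bu_2^*\rangle=0$. Then
\[
\mathcal{G}(\bz;\bz^*,\bu^*)=\Bigl[\Phi(\bx,\by^*)-\Phi(\bx^*,\by^*)+\langle \bx-\bx^*,K_1^*\bu_1^*\rangle\Bigr]+\Bigl[\Phi(\bx^*,\by^*)-\Phi(\bx^*,\by)+\langle \by-\by^*,K_2^*\bu_2^*\rangle\Bigr].
\]
It remains to show each bracket is nonnegative.

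For the first bracket, use that $\phi(\cdot,\by^*)$ is convex on $\dom f$, because $\by^*\in\dom g$ and each $\phi_i(\cdot,y_i^*)$ is convex on $\dom f_i$. Since $\phi$ is differentiable, the gradient inequality gives
\[
\phi(\bx,\by^*)\ge\phi(\bx^*,\by^*)+\langle\nabla_\bx\phi(\bx^*,\by^*),\bx-\bx^*\rangle.
\]
This is valid for $\bx,\bx^*\in\dom f$, because $\dom f$ is convex and the segment between them lies in it. The subgradient inequality for $f$ at $\bx^*$ with subgradient $-K_1^*\bu_1^*-\nabla_\bx\phi(\bx^*,\by^*)$ gives
\[
f(\bx)\ge f(\bx^*)+\langle -K_1^*\bu_1^*-\nabla_\bx\phi(\bx^*,\by^*),\bx-\bx^*\rangle.
\]
Adding the two inequalities, the gradient terms cancel and the first bracket is $\ge 0$.

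The second bracket is handled the same way. Concavity of $\phi(\bx^*,\cdot)$ on $\dom g$ gives $-\phi(\bx^*,\by)\ge-\phi(\bx^*,\by^*)-\langle\nabla_\by\phi(\bx^*,\by^*),\by-\by^*\rangle$. The subgradient inequality for $g$ at $\by^*$ gives $g(\by)\ge g(\by^*)+\langle -K_2^*\bu_2^*+\nabla_\by\phi(\bx^*,\by^*),\by-\by^*\rangle$. Adding these, the gradient terms again cancel and the $K_2^*\bu_2^*$ term offsets the one in the bracket.

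The only delicate point is the gradient inequality when convexity holds merely on $\dom f_i$ rather than on all of $\R^p$. It follows by differentiating the convex function $t\mapsto\phi(\bx^*+t(\bx-\bx^*),\by^*)$ on $[0,1]$ at $t=0^+$, using that $\phi$ is differentiable everywhere. The rest is bookkeeping of signs, in particular checking that $B$ carries $-\nabla_\by\phi$ so that the $g$-inclusion has the sign used above.
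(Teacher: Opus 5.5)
Your proof is correct and takes essentially the same route as the paper. You unpack~\eqref{eq:pdincl} to get $K\bz^*=0$ and the two subdifferential inclusions, then add the subgradient inequalities for $f$ and $g$ to the gradient inequalities for $\phi(\cdot,\by^*)$ and $\phi(\bx^*,\cdot)$, so the $\nabla\phi$ terms cancel and the $K^*\bu^*$ terms vanish via $K\bz^*=0$. Your grouping around $\Phi(\bx^*,\by^*)$ and your one-dimensional justification of the gradient inequality when convexity holds only on $\dom f_i$ are more explicit versions of the paper's single combined computation.
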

\begin{proof}
The  primal-dual inclusion~\eqref{eq:pdincl} with the operators in~\eqref{eq:opreform} reduces to 
\begin{equation}\label{eq:pdiG}
\left\{
\begin{aligned}
  &0  = K\bz^*, \\
 &\partial f(\bx^*)\ni-K_1^* \bu_x^* - \nabla_{\bx} \phi(\bx^*,\by^*) , \\
 &\partial g(\by^*)\ni- K_2^*\bu_y^* + \nabla_\by\phi(\bx^*,\by^*)   ,
\end{aligned}
\right.
\end{equation}
where $\bu^*=(\bu_x^*,\bu_y^*)$. Let $\bz=(\bx,\by)\in\dom h$. The convexity of $f$ and $g$ then yields
\begin{equation}\label{eq:Gapf-1}
f(\bx)-f(\bx^*) \geq  \langle K_1^*\bu^*_x + \nabla_\bx\phi(\bx^*,\by^*), \bx^*-\bx \rangle,  \quad g(\by )- g(\by^*) \geq  \langle K_2^*\bu^*_y - \nabla_\by\phi(\bx^*,\by^*), \by^*-\by \rangle.
\end{equation}
Furthermore, by the convexity of $\phi(\cdot,\by^*)$ and the concavity of $\phi(\bx^*,\cdot)$, we have \begin{equation*}
        \phi(\bx,\by^*) - \phi(\bx^*,\by^*) \geq \langle \nabla_{\bx}\phi(\bx^*,\by^*), \bx-\bx^*\rangle, \quad \phi(\bx^*,\by^*) - \phi(\bx^*,\by) \geq \langle - \nabla_{\by}\phi(\bx^*,\by^*), \by-\by^*\rangle.
\end{equation*}
By adding the two equations above, we get
\begin{equation}\label{eq:Gapf-2}
\phi(\bx,\by^*)  - \phi(\bx^*,\by)  \geq \langle \nabla_{\bx}\phi(\bx^*,\by^*), \bx-\bx^*\rangle + \langle - \nabla_{\by}\phi(\bx^*,\by^*), \by-\by^*\rangle.
\end{equation}
Therefore, for all $\bz\in\dom h$, making use of the relations~\eqref{eq:Gapf-1} and~\eqref{eq:Gapf-2} we obtain
\[
\begin{aligned}
 \mathcal{G}(\bz;\bz^*,\bu^*)  &= \Phi(\bx,\by^*)  - \Phi(\bx^*,\by) + \langle K\bz, \bu^*\rangle \\
 & \geq \langle K_1^* \bu^*_x , \bx^*-\bx\rangle + \langle K_2^*\bu^*_y, \by^*-\by\rangle + \langle K\bz,\bu^*\rangle\\
 & = \langle K\bz^*,\bu^*\rangle = 0,
\end{aligned}
\]
where the last equality is due to the first equation in~\eqref{eq:pdiG}.
\end{proof}

Below, we rely on the following technical lemma, which is a specialisation of Lemma~\ref{l:intermediate} to our current setting.

\begin{lemma}\label{l:gap-estimate}
    Suppose that the functions $f_i, g_i$ and $\phi_i$ satisfy the assumptions of Problem~\ref{problem:1}, for all $i\in\{1,2,\ldots,n\}$. Let $\tilde{\bz}=(\tilde{\bx},\tilde{\by})\in\bigl(\mathcal{D}_{\R^{n\times p}}\times\mathcal{D}_{\R^{n\times q}}\bigr)\cap\dom h$ and $\tilde{\bu}\in\R^{n\times p}\times\R^{n\times q}$. Let $(\bz^k)$ and $(\bu^k)$ be the sequences generated by Algorithm~\ref{alg:pdls} applied to~\eqref{eq:mm_as_mi}. Then, for all $k\in\N$, it holds\begin{equation}\label{eq:Ginequality}
        \begin{aligned}
\tau_k \mathcal{G}(\bz^{k+1};\tilde{\bz},\tilde{\bu}) + \cV_{k+1}^{(\tilde{\bz},\tilde{\bu})} & \leq \cV_k^{(\tilde{\bz},\tilde{\bu})} - \frac{1}{2}\left( \frac{1-\beta}{2} -  \tau_{k-1}\|K\|\right) \|\bz^{k+1}-\bz^k\|^2 \\
& \quad - \frac{1}{2}(1-\tau_{k-1} \|K\|) \|\bu^{k+1}-\bu^k\|^2.
\end{aligned}
    \end{equation}
\end{lemma}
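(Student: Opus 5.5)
Lemma~\ref{l:intermediate} already gives the estimate with the residual $\cR_k^{(\tilde{\bz},\tilde{\bu})}$ in place of $\tau_k\mathcal{G}(\bz^{k+1};\tilde{\bz},\tilde{\bu})$. So the whole task is to show
\[
\cR_k^{(\tilde{\bz},\tilde{\bu})} \geq \tau_k \mathcal{G}(\bz^{k+1};\tilde{\bz},\tilde{\bu}),
\]
and then substitute this into~\eqref{eq:intermediate}. I will bound the two inner products in~\eqref{eq:residual} separately, with $A=\partial h$, $B=(\nabla_\bx\phi,-\nabla_\by\phi)$ and $C=N_{\{0\}}$, so that $C^{-1}=N_{\{0\}}^{-1}$ is the zero operator's inverse, i.e.\ $C^{-1}(\bu)=\{0\}$ for every $\bu$.

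\emph{Dual term.} Since $C^{-1}\equiv\{0\}$, we have $\omega^{k+1}=0$. Also $\tilde{\bz}$ lies in the diagonal subspaces, so $K\tilde{\bz}=0$. Hence the dual term $\tau_{k-1}\langle \bu^{k+1}-\tilde{\bu},\omega^{k+1}-K\tilde{\bz}\rangle$ vanishes.

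\emph{Primal term.} Write $\bz^{k+1}=(\bx^{k+1},\by^{k+1})$. Since $\nu^{k+1}\in\partial h(\bz^{k+1})$, the subgradient inequality gives
\[
\langle \bz^{k+1}-\tilde{\bz},\nu^{k+1}\rangle\ge h(\bz^{k+1})-h(\tilde{\bz}) = f(\bx^{k+1})+g(\by^{k+1})-f(\tilde\bx)-g(\tilde\by),
\]
and $h(\tilde{\bz})$ is finite because $\tilde{\bz}\in\dom h$. Next, $\langle \bz^{k+1}-\tilde{\bz},K^*\tilde{\bu}\rangle=\langle K\bz^{k+1},\tilde{\bu}\rangle$, again using $K\tilde{\bz}=0$. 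For the $B$ term, convexity of $\phi(\cdot,\by^{k+1})$ gives
\[
\langle\nabla_\bx\phi(\bz^{k+1}),\bx^{k+1}-\tilde\bx\rangle\ge \phi(\bx^{k+1},\by^{k+1})-\phi(\tilde\bx,\by^{k+1}),
\]
and concavity of $\phi(\bx^{k+1},\cdot)$ gives
\[
-\langle\nabla_\by\phi(\bz^{k+1}),\by^{k+1}-\tilde\by\rangle\ge \phi(\bx^{k+1},\tilde\by)-\phi(\bx^{k+1},\by^{k+1}).
\]
Adding these two, the $\phi(\bz^{k+1})$ terms cancel and we obtain
\[
\langle B(\bz^{k+1}),\bz^{k+1}-\tilde\bz\rangle\ge \phi(\bx^{k+1},\tilde\by)-\phi(\tilde\bx,\by^{k+1}).
\]

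\emph{Combining.} Summing the three primal bounds and regrouping gives $\Phi(\bx^{k+1},\tilde\by)-\Phi(\tilde\bx,\by^{k+1})+\langle K\bz^{k+1},\tilde\bu\rangle=\mathcal{G}(\bz^{k+1};\tilde\bz,\tilde\bu)$. Multiplying by $\tau_k>0$ yields $\cR_k\ge\tau_k\mathcal{G}$, and the lemma follows from~\eqref{eq:intermediate}.

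\emph{Main obstacle: domain issues.} The convexity and concavity inequalities are assumed only on $\dom f_i$ and $\dom g_i$. They apply here because $\bz^{k+1}\in\dom A\subseteq\dom h$ (it is a resolvent output) and $\tilde{\bz}\in\dom h$. Every point at which $\phi$ is evaluated, namely $(\bx^{k+1},\by^{k+1})$, $(\tilde\bx,\by^{k+1})$ and $(\bx^{k+1},\tilde\by)$, then has its $\bx$-part in $\dom f$ and its $\by$-part in $\dom g$, which is exactly the setting of Problem~\ref{problem:1}. This is the step I would verify most carefully; the rest is bookkeeping.
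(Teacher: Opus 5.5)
Your proposal is correct and matches the paper's proof step for step: you reduce to $\cR_k^{(\tilde{\bz},\tilde{\bu})}\geq\tau_k\mathcal{G}(\bz^{k+1};\tilde{\bz},\tilde{\bu})$ via Lemma~\ref{l:intermediate}, remove the dual term using $\omega^{k+1}=0$ and $K\tilde{\bz}=0$, and bound the primal term with the subgradient inequality for $h$ together with the convexity/concavity of $\phi$. Your explicit check that every evaluation point lies in $\dom f\times\dom g$ is a detail the paper leaves implicit. One wording slip: $N_{\{0\}}$ is the inverse of the zero operator, so $C^{-1}=N_{\{0\}}^{-1}$ is the zero operator itself, which is what you actually use.
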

\begin{proof}
    By Lemma~\ref{l:intermediate}, it suffices to show that when specialised to problem~\eqref{eq:mm_as_mi} the residual $\cR_k^{(\tilde{\bz},\tilde{\bu})}$ is lower bounded by $\tau_k\cG(\bz^{k+1};\tilde{\bz},\tilde{\bu})$, for all $k\in\N$.

    Fix $k\in\N$. Since $C=N_{\{0\}}$, the element $\omega^{k+1} \in C^{-1}(u^{k+1}) $ in~\eqref{eq:c-graph_elements} vanishes and $K\tilde{\bz}=0$ by assumption, then the second summand in the residual~\eqref{eq:residual} becomes zero. Thus, we get \begin{equation*}
        \cR_k^{(\tilde{\bz},\tilde{\bu})} = \tau_k \bigl\langle \bz^{k+1}-\tilde{\bz}, \, \nu^{k+1}\bigr\rangle + \tau_k \bigl\langle K\bz^{k+1}, \tilde{\bu}\bigr\rangle + \tau_k \bigl\langle B(\bz^{k+1}), \bz^{k+1}-\tilde{\bz} \bigr\rangle.
    \end{equation*}
    The subdifferential inequality for $A = \partial h$ then yields $\langle \bz^{k+1}-\tilde{\bz}, \nu^{k+1}\rangle \geq h(\bz^{k+1}) - h(\tilde{\bz})$.  Next, using the convexity of $\phi(\cdot,\by^{k+1})$ and the concavity of $\phi(\bx^{k+1},\cdot)$, we get the inequalities
\[
\phi(\bx^{k+1},\by^{k+1})-\phi(\tilde{\bx},\by^{k+1}) \leq\langle \nabla_\bx\phi(\bx^{k+1},\by^{k+1}),\bx^{k+1}-\tilde{\bx}\rangle 
\]
and
\[
 \phi(\bx^{k+1},\tilde{\by})-\phi(\bx^{k+1},\by^{k+1}) \leq \langle \nabla_\by\phi(\bx^{k+1},\by^{k+1}),\tilde{\by}-\by^{k+1}\rangle.
\]
Combining both inequalities yields
\begin{equation*}
    \langle B(\bz^{k+1}) , \bz^{k+1}-\tilde{z} \rangle \geq  \phi(\bx^{k+1},\tilde{\by})-\phi(\tilde{\bx},\by^{k+1}).
\end{equation*}
    Altogether, we conclude that \begin{equation*}
        \cR_k^{(\tilde{\bz},\tilde{\bu})} \geq \tau_k \bigl( h(\bz^{k+1}) - h(\tilde{\bz}) + \phi(\bx^{k+1},\tilde{\by}) - \phi(\tilde{\bx},\by^{k+1}) + \langle K\bz^{k+1},\tilde{\bu}\rangle \bigr) = \tau_k \mathcal{G}(\bz^{k+1}; \tilde{\bz},\tilde{\bu}),
    \end{equation*} which yields the desired result.
\end{proof}

We are now in a position to state our result on ergodic convergence rates. To this end, for all $N\in\N$, we define the ergodic sequences \begin{equation}\label{eq:ergodic-squences}
    S_N := \sum_{k=0}^N \tau_k \quad \text{and} \quad \widehat{\bz}^N = \bigl(\hbx^N, \hby^N\bigr) := \frac{1}{S_N}\sum_{k=0}^N \tau_k\, \bigl(\bx^{k+1},\by^{k+1}\bigr),
\end{equation}
where $(\bz^k)=(\bx^k,\by^k)$ and $(\tau_k)$ stand for the sequences generated by Algorithm~\ref{alg:pdls} applied to~\eqref{eq:mm_as_mi}.

\begin{theorem}\label{th:first-ergodic}
Suppose that the functions $f_i, g_i$ and $\phi_i$ satisfy the assumptions of Problem~\ref{problem:1} for all $i\in\{1,2,\ldots,n\}$.  Let $(\widehat{\bz}^N)$  be the ergodic sequence defined in~\eqref{eq:ergodic-squences}. The following assertions hold.
\begin{enumerate}[label=(\roman*)]
\item\label{it:t-first-erg-1} 
For every $\tilde{\bz}=(\tilde{\bx},\tilde{\by})\in\bigl(\mathcal{D}_{\R^{n\times p}}\times\mathcal{D}_{\R^{n\times q}}\bigr)\cap\dom h$ and $\tilde{\bu}\in\R^{n\times p}\times\R^{n\times q}$, and for all $N\in\N$, it holds
\begin{equation}\label{eq:ergodic-gap}
        \mathcal{G}(\widehat{\bz}^N;\tilde{\bz},\tilde{\bu})\leq \frac{\cV_0^{(\tilde{\bz},\tilde{\bu})}}{S_N}.
    \end{equation}
    
\item\label{it:t-first-erg-2} In addition, suppose that~\eqref{eq:mm_as_mi} has at least one solution and that Assumption~\ref{ass:tauk} holds. Then the sequence $(\widehat{\bz}^N)$  converges to $\bz^*\in\mathcal{D}_{\R^{n\times p}}\times\mathcal{D}_{\R^{n\times q}}$ as $N\to\infty$, with $\bz^*$ being the solution of~\eqref{eq:mm_as_mi} to which the sequence $(\bz^k)$ generated by Algorithm~\ref{alg:pdls} converges by Theorem~\ref{th:conv}.

\item\label{it:t-first-erg-3} Furthermore, for all $N\in\N$, we get
    \begin{align}\label{eq:ergrates}
    \bigl\|K\widehat{\bz}^N\bigr\| & = \mathcal{O}\left( \frac{1}{N}\right) \quad\text{and} \quad\left| \Phi(\widehat{\bx}^N,\bar{\by}) - \Phi(\bar{\bx},\widehat{\by}^N)\right| = \mathcal{O}\left( \frac{1}{N}\right),
\end{align}
for any  primal solution $\bar{\bz}=(\bar{\bx},\bar{\by})\in\mathcal{D}_{\R^{n\times p}}\times\mathcal{D}_{\R^{n\times q}}$  of~\eqref{eq:mm_as_mi}. 
\end{enumerate}
\end{theorem}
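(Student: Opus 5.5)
For \ref{it:t-first-erg-1}, I sum the inequality~\eqref{eq:Ginequality} of Lemma~\ref{l:gap-estimate} over $k=0,\dots,N$. The last two terms on its right-hand side are nonpositive, because $\tau_{k-1}\|K\|<\delta(1-\beta)/2$. So telescoping gives
\[
\sum_{k=0}^N \tau_k\,\mathcal{G}(\bz^{k+1};\tilde{\bz},\tilde{\bu}) \leq \cV_0^{(\tilde{\bz},\tilde{\bu})} - \cV_{N+1}^{(\tilde{\bz},\tilde{\bu})} \leq \cV_0^{(\tilde{\bz},\tilde{\bu})},
\]
where the last step uses Lemma~\ref{lemma:cVnonnegative}. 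Next, $\bz\mapsto\mathcal{G}(\bz;\tilde{\bz},\tilde{\bu})$ is convex: $\Phi(\cdot,\tilde{\by})$ is convex in $\bx$ (since $f$ and $\phi(\cdot,\tilde\by)$ are convex), $-\Phi(\tilde{\bx},\cdot)=g-\phi(\tilde\bx,\cdot)$ is convex in $\by$, and $\langle K\bz,\tilde\bu\rangle$ is linear. Here I need $\tilde\by\in\dom g$ and $\tilde\bx\in\dom f$, which holds because $\tilde\bz\in\dom h$. Applying Jensen's inequality with weights $\tau_k/S_N$ then gives~\eqref{eq:ergodic-gap}.

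For \ref{it:t-first-erg-2}, Theorem~\ref{th:conv} (with Proposition~\ref{prop:sp-mi}) gives $\bz^k\to\bz^*$. Assumption~\ref{ass:tauk} gives $\tau_k\geq\underline\tau>0$, so $S_N\to\infty$. A weighted Cesàro (Stolz--Cesàro) argument then yields $\widehat\bz^N\to\bz^*$. Moreover $\bz^*$ lies in the diagonal, because $K\bz^*=0$ from the first line of~\eqref{eq:pdiG}.

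For \ref{it:t-first-erg-3}, since $S_N\geq (N+1)\underline\tau$, the bound in \ref{it:t-first-erg-1} is $\mathcal{O}(1/N)$ whenever $\cV_0$ is finite. To bound $\|K\widehat\bz^N\|$, I take $\tilde\bz=\bar\bz$ and $\tilde\bu=\bu^*+\bs$ with $\bs:=K\widehat\bz^N/\|K\widehat\bz^N\|$, assuming $K\widehat\bz^N\neq0$. Here $\bu^*$ is any dual solution paired with $\bar\bz$; one exists because $\bar\bz$ solves~\eqref{eq:mm_as_mi}. Writing $\mathcal{G}(\widehat\bz^N;\bar\bz,\bu^*+\bs)=\mathcal{G}(\widehat\bz^N;\bar\bz,\bu^*)+\|K\widehat\bz^N\|$, and using Proposition~\ref{prop:cGnonnegative} (with $\widehat\bz^N\in\dom h$ by convexity), I get $\|K\widehat\bz^N\|\leq \sup_{\|\bs\|\le1}\cV_0^{(\bar\bz,\bu^*+\bs)}/S_N$. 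This supremum is finite because $\cV_0$ is a continuous function of $\tilde\bu$ over a compact set, so $\|K\widehat\bz^N\|=\mathcal{O}(1/N)$.

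For the gap, the upper bound comes from taking $\tilde\bu=0$: $\Phi(\widehat\bx^N,\bar\by)-\Phi(\bar\bx,\widehat\by^N)=\mathcal{G}(\widehat\bz^N;\bar\bz,0)\leq\cV_0^{(\bar\bz,0)}/S_N$. For the lower bound, Proposition~\ref{prop:cGnonnegative} with $\bu^*$ gives $\Phi(\widehat\bx^N,\bar\by)-\Phi(\bar\bx,\widehat\by^N)\geq-\langle K\widehat\bz^N,\bu^*\rangle\geq-\|\bu^*\|\,\|K\widehat\bz^N\|=-\mathcal{O}(1/N)$. The main obstacle is this lower bound, which relies on existence of a dual multiplier $\bu^*$ attached to the chosen primal solution $\bar\bz$. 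I would justify that by standard primal-dual correspondence for~\eqref{eq:pdincl}: $\bar\bz$ solves~\eqref{eq:mm_as_mi}, so there is $\bu$ with $\bu\in N_{\{0\}}(K\bar\bz)$ and $-K^*\bu\in(A+B)\bar\bz$. A second point to state clearly is that the $\mathcal{O}(1/N)$ constants depend on the initial data through $\cV_0$ and on $\underline\tau$.
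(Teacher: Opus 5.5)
Your proposal is correct and takes essentially the same route as the paper. For (i) you telescope \eqref{eq:Ginequality} and apply Jensen's inequality; for (ii) you use a weighted Ces\`aro argument with $S_N\geq (N+1)\tau_{\min}$; for (iii) you use the shifted multiplier $\bu^*+K\widehat{\bz}^N/\|K\widehat{\bz}^N\|$ with Proposition~\ref{prop:cGnonnegative} and then $\tilde{\bu}=0$ for the gap. Your explicit checks that $\widehat{\bz}^N\in\dom h$ and that a dual multiplier paired with $\bar{\bz}$ exists are steps the paper uses only implicitly. The appeal to Proposition~\ref{prop:sp-mi} in (ii) is unnecessary, since Theorem~\ref{th:conv} only needs a solution of~\eqref{eq:mm_as_mi}.
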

\begin{proof}
\ref{it:t-first-erg-1} Let $N\in\N$. Summing inequality~\eqref{eq:Ginequality} from $k=0$ to $N$, dropping the nonpositive terms in the right-hand side (recall that $\tau_{k-1}\|K\|<\delta(1-\beta)/2$) and using the nonnegativity of $\cV_{N+1}^{(\tilde{\bz},\tilde{\bu})}$ (recall Lemma~\ref{lemma:cVnonnegative}), we get \begin{equation*}
        \sum_{k=0}^N \tau_k\mathcal{G}(\bz^{k+1};\tilde{\bz},\tilde{\bu}) \leq \cV_0^{(\tilde{\bz},\tilde{\bu})} - \cV_{N+1}^{(\tilde{\bz},\tilde{\bu})} \leq \cV_0^{(\tilde{\bz},\tilde{\bu})}.
    \end{equation*} Dividing by $S_N$ and using the convexity of $\mathcal{G}(\cdot\,;\tilde{\bz},\tilde{\bu})$, then \eqref{eq:ergodic-gap} follows. 

    \ref{it:t-first-erg-2} By Proposition~\ref{prop:descent_inequality}\ref{it:di-3}, the sequence $(\bz^k,\bu^k)$ is bounded, and then Assumption~\ref{ass:tauk} yields, for all $k\in\N$, $\tau_k\geq\tau_{\min}$ for some $\tau_{\min}>0$. 
    In particular, $S_N\geq (N+1)\tau_{\min}\to\infty$. Moreover, the upper bound in Step~2 of Algorithm~\ref{alg:pdls} yields  $\tau_k< \delta(1-\beta)/(2\|K\|)$, for all $k\in\N$. Let $\varepsilon>0$.  By Theorem~\ref{th:conv}, there exists $N_\varepsilon\in\N$ such that $\|\bz^{k+1}-\bz^*\|\leq2\tau_{\min}\varepsilon\|K\|/(\delta(1-\beta))$ for all $k\geq N_\varepsilon$. Hence, for all $N\geq N_\varepsilon$,
\[
\begin{aligned}
\|\widehat{\bz}^N-\bz^*\| &\leq \frac{1}{S_N}\sum_{k=0}^N\tau_k\|\bz^{k+1}-\bz^*\| \\
& \leq \frac{1}{S_N}\sum_{k=0}^{N_\varepsilon-1}\tau_k\|\bz^{k+1}-\bz^*\| + \frac{1}{S_N}\sum_{k=N_\varepsilon}^N\tau_k\|\bz^{k+1}-\bz^*\|\\
&< \frac{1}{(N+1)\tau_{min}}\sum_{k=0}^{N_\varepsilon-1}\tau_k\|\bz^{k+1}-\bz^*\| + \varepsilon, 
\end{aligned}
\]
and letting $N\to\infty$ gives $\limsup_{N\to\infty}\|\widehat{\bz}^N-\bz^*\|\leq\varepsilon$. Since $\varepsilon>0$ was arbitrary, $\widehat{\bz}^N\to\bz^*$. Finally, recall that $\bz^*\in\mathcal{D}_{\R^{n\times p}}\times\mathcal{D}_{\R^{n\times q}}$ since $K\bz^*=0$.

    \ref{it:t-first-erg-3} We now show the first estimate in~\eqref{eq:ergrates}. If $K\widehat{\bz}^N=0$ there is nothing to prove. Otherwise, let $(\bar{\bz},\bar{\bu})$ be a primal-dual solution of~\eqref{eq:mm_as_mi} and set $\tilde{\bu}:=\bar{\bu}+K\widehat{\bz}^N/\|K\widehat{\bz}^N\|$ to get \[\mathcal{G}(\widehat{\bz}^N;\bar{\bz},\tilde{\bu})=\mathcal{G}(\widehat{\bz}^N;\bar{\bz},\bar{\bu})+\|K\widehat{\bz}^N\|.\] Set $R_0:=\max\bigl\{\cV_0^{(\bar{\bz},\bu)}\, :\, \|\bu-\bar{\bu}\|\leq 1\bigr\}\geq0$. As $(\bar{\bz},\bar{\bu})$ is a primal-dual solution, $\mathcal{G}(\widehat{\bz}^N;\bar{\bz},\bar{\bu}) \geq 0$ by Proposition~\ref{prop:cGnonnegative}. It then follows from \eqref{eq:ergodic-gap} that \begin{equation*}
        \|K\widehat{\bz}^N\| \leq \mathcal{G}(\widehat{\bz}^N;\bar{\bz},\tilde{\bu}) \leq \frac{\cV_0^{(\bar{\bz},\tilde{\bu})}}{S_N} \leq \frac{R_0}{S_N} \leq \frac{R_0}{(N+1)\tau_{\min}},
    \end{equation*} which proves the desired rate. For the second estimate in~\eqref{eq:ergrates}, take $\tilde{\bz}:=\bar{\bz}$ and $\tilde{\bu}:=0$ in \eqref{eq:ergodic-gap} to obtain \begin{equation}\label{eq:ecrPhi-1} 
    \Phi\bigl(\hbx^N, \bar{\by}\bigr)  - \Phi\bigl(\bar{\bx}, \hby^N\bigr) \leq \frac{R_1}{(N+1)\tau_{\min}},
    \end{equation} where $R_1:=\cV_0^{(\bar{\bz},0)}\geq0$. Moreover, using the Cauchy-Schwarz inequality and the estimate above for $\| K\widehat{\bz}^N\|$, 
    \begin{equation}\label{eq:ecrPhi-2} \begin{aligned}
        0 &\leq \mathcal{G}(\widehat{\bz}^N;\bar{\bz},\bar{\bu}) \\
        &= \Phi(\widehat{\bx}^N,\bar{\by}) - \Phi(\bar{\bx},\widehat{\by}^N) + \langle K\widehat{\bz}^N,\bar{\bu}\rangle\\
         &\leq \Phi(\widehat{\bx}^N,\bar{\by}) - \Phi(\bar{\bx},\widehat{\by}^N) + \frac{R_0}{(N+1)\tau_{\min}} \|\bar{\bu}\|.
    \end{aligned} 
    \end{equation}
    Hence, \eqref{eq:ecrPhi-1} and~\eqref{eq:ecrPhi-2} conclude the second estimate in~\eqref{eq:ergrates}
\end{proof}

\section{Distributed proximal-reflected-gradient  algorithm with  linesearch for min-max problems}\label{sect:distalgorithm}

We now turn our attention to deriving a distributed algorithm for Problem~\ref{problem:1}. This will be achieved by an appropriate application of Algorithm~\ref{alg:pdls} to the problem reformulation outlined in Section~\ref{sect:rmi}.

Let us recall the primal variable $\bz\in\R^{n\times p} \times \R^{n\times q}$ of problem~\eqref{eq:mm_as_mi}, the operators $A,B$ and the matrix $K$ defined in~\eqref{eq:opreform}. In addition, define the dual variable $\bu\in\R^{n\times p} \times \R^{n\times q}$. By noting that $C=N_{\{0\}}$, \cite[Corollary 16.3]{bauschke2017convex} yields that  $J_{C^{-1}} = J_{0} =  \Id$ and the update rules~\eqref{eq:dual_update}-\eqref{eq:primal_update} in Algorithm~\ref{alg:pdls} applied to~\eqref{eq:mm_as_mi} can be written as
\begin{equation}\label{eq:dist_it}
\left\{
\begin{aligned}
\bu^{k+1} &=  \bu^k + \tau_{k-1}K\bz^k,\\
\bw^{k+1} & = \bz^k - \tau_k K^*\bu^{k+1} - \tau_{k-1} K^*(\bu^{k+1}-\bu^k) - \tau_k B(\bz^k) - \tau_{k-1} \bigl(B(\bz^k)-B(\bz^{k-1})\bigr), \\
\bz^{k+1} &=  J_{\tau_k A} (\bw^{k+1} ).
\end{aligned}
\right.
\end{equation}

Note that the subroutines related to the linesearch are omitted in~\eqref{eq:dist_it}. We will later address these details and discuss a procedure that allows for a distributed linesearch implementation, but for now we will concentrate on ensuring that the updates described by iteration~\eqref{eq:dist_it} are performed in a decentralised fashion. 
To this end, observe that  information exchange between agents occurs exclusively when evaluating the linear operators $K$ and $K^*$. To better understand the role of these matrix-vector operations in the communication pattern, let us take as a representative example the $\bx$ component in the product $K\bz=(K_1\bx,K_2\by)$. The premultiplication of the vector $\bx\in\R^{n\times p}$ by the matrix $K_1=(K_{1,ij})_{i,j=1}^n$ leads to a vector $K_1 \bx \in \R^{n\times p}$ whose $i$-th row is given by
\[
(K_1 \bx)_i = \sum_{j=1}^n K_{1,ij} x_j^\top, \quad \text{for all } i\in\{1,2,\ldots,n\}.
\]
Thus, the new vectors given by the rows of $K_1 \bx$ are obtained as linear combinations of  the agent vectors $x_1, x_2, \ldots, x_n$. We aim to adapt these communication rounds to the desired network topology, whose structure we assume to be determined by the (undirected) network graphs  $\cG_1:= (\cV_1,\cE_1)$ and $\cG_2:=(\cV_2,\cE_2)$,  for the variables $\bx$ and $\by$, respectively. Ideally, information exchange should be kept to a minimum---preferably to a single round per iteration. Distributed algorithms typically achieve this by leveraging \emph{mixing matrices}.

\begin{definition}[Mixing matrix]\label{def:mm}
	Let $\cG:=(\cV,\cE)$ be an undirected connected graph with  $\left| \cV \right| =n$ nodes. A matrix $W\in\R^{n\times n}$ is said to be a \emph{mixing matrix} if
	\begin{enumerate}[label=(\roman*)]
		\item\label{it:mm-1} $W_{ij} = 0$, whenever  $(i,j) \notin \cE$;
		\item\label{it:mm-2} $W=W^T$;
		\item\label{it:mm-3} $\ker(I-W) = \mathcal{D}_{\mathbb{R}^{n}}$;
		\item\label{it:mm-4} $ W \preceq I$, where $I$ is the identity matrix.\footnote{Usually mixing matrices are also required to also satisfy the condition $-I \prec W$, but this is not required in our analysis.}
	\end{enumerate}
\end{definition}

Generally, one can construct a mixing matrix of a graph $\cG$ as $I-\mathcal{L}/\sigma$ with $\mathcal{L}$ the Laplacian matrix and $\sigma>\lambda_{max}(\mathcal{L})/2$. Further examples can be consulted, for instance, in~\cite[Section~2.4]{MR3343366}.
Property~\ref{it:mm-1} ensures that only connected agents can exchange information. Conditions~\ref{it:mm-2}-\ref{it:mm-4} are used to define a proper consensus matrix that allows for a single communication round in an algorithmic iteration. This will become clear shortly.

Recall that every agent in our framework is in charge of both variables $x_i$ and $y_i$. We will allow different network topologies for each of these variables. That is, the graphs  $\cG_1$ and $\cG_2$ associated  to the variables $\bx$ and $\by$ respectively  are not necessarily the same. Hence, two different mixing matrices will be considered for each of such graphs. For $t\in\{1,2\}$, let $W_t$  be a mixing matrix of $\cG_t$. We set $K_t = \bigl(I-W_t\bigr)^{1/2}$, which is well-defined by conditions~\ref{it:mm-2} and~\ref{it:mm-4}. In particular, $K$ in~\eqref{eq:dist_it} will be taken as $K=(K_1,K_2)$ to obtain a satisfactory distributed implementation of our algorithm. The following proposition presents some useful properties of $K$ that  motivate this choice.

\begin{proposition}\label{prop:K}
The following properties hold:
\begin{enumerate}[label=(\roman*)]
\item\label{it:K1} $K^*=K$ and $KK=(I-W)$ with $W:=(W_1,W_2)$;
\item\label{it:K2} $\ker K = \mathcal{D}_{\R^{n}}\times\mathcal{D}_{\R^{n}}$;
\item\label{it:K3} $\|K\| = \sqrt{1- \min\bigl\{\lambda_{\min}(W_1),\lambda_{\min}(W_2)\bigr\}}$.
\end{enumerate}
\end{proposition}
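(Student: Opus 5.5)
The plan is to reduce everything to the spectral decomposition of the symmetric positive semidefinite matrices $I-W_t$, $t\in\{1,2\}$. By Definition~\ref{def:mm}\ref{it:mm-2} and~\ref{it:mm-4}, each $I-W_t$ is symmetric and positive semidefinite. Hence it admits an orthogonal diagonalisation $I-W_t=Q_t\Lambda_tQ_t^\top$ with $\Lambda_t\succeq 0$, and its square root $K_t=Q_t\Lambda_t^{1/2}Q_t^\top$ is the unique symmetric positive semidefinite matrix with $K_tK_t=I-W_t$. Since $K$ acts blockwise as $K\bz=(K_1\bx,K_2\by)$, with each $K_t$ acting on $\R^{n\times p}$ (respectively $\R^{n\times q}$) by left multiplication, every property reduces to the corresponding one for $K_1$ and $K_2$ separately.

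For~\ref{it:K1}, I would note that, with respect to the Frobenius inner product, left multiplication by a matrix $M$ has adjoint given by left multiplication by $M^\top$. This gives $K^*=(K_1^\top,K_2^\top)=(K_1,K_2)=K$ by symmetry. The identity $KK=(K_1^2,K_2^2)=(I-W_1,I-W_2)=I-W$ is then immediate.

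For~\ref{it:K2}, I would use that $\ker K_t=\ker K_t^2$ for symmetric $K_t$. Indeed, if $K_t^2v=0$, then $\|K_tv\|^2=\langle K_t^2v,v\rangle=0$. Therefore $\ker K_t=\ker(I-W_t)=\mathcal{D}_{\R^n}$ by Definition~\ref{def:mm}\ref{it:mm-3}. The introduction of the paper observes that consensus enforcement is independent of the column dimension, so this lifts to $\ker K_1=\mathcal{D}_{\R^{n\times p}}$ on $\R^{n\times p}$, and similarly for $K_2$. The product structure then gives the kernel of $K$.

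For~\ref{it:K3}, I would use $\|K\|=\max\{\|K_1\|,\|K_2\|\}$, which holds because $K$ is block diagonal. In addition, the operator norm of left multiplication on $\R^{n\times p}$ equals the spectral norm of $K_t$, as can be seen by acting column by column. For a symmetric positive semidefinite $K_t$ we have $\|K_t\|^2=\lambda_{\max}(K_t^2)=\lambda_{\max}(I-W_t)=1-\lambda_{\min}(W_t)$. Taking the maximum over $t$ gives the formula. The only mildly delicate points are the two identifications between the matrix acting on $\R^n$ and on $\R^{n\times p}$, for the kernel and for the norm, and I would handle both with the column-wise argument.
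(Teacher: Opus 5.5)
Your proposal is correct and follows essentially the same route as the paper: symmetry of the positive semidefinite square root for (i), $\ker (I-W_t)^{1/2}=\ker(I-W_t)$ together with Definition~\ref{def:mm}\ref{it:mm-3} for (ii), and the block-diagonal structure of $K$ for (iii), where $\|K\|$ is the largest eigenvalue of the blocks $(I-W_t)^{1/2}$. Your additional justifications fill in details the paper leaves implicit: the identity $\ker K_t=\ker K_t^2$, and the column-wise identification of left multiplication on $\R^{n\times p}$ with $K_t$ acting on $\R^n$, which you use for both the kernel and the norm.
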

\begin{proof}
\ref{it:K1} It easily follows by construction. \ref{it:K2} By  the definition of the square root of a positive semidefinite matrix and Definition~\ref{def:mm}\ref{it:mm-3},  $\ker K_i = \ker (I-W_i)^{1/2} = \ker (I-W_i)$. Let $\bz=(\bx,\by)\in\R^{n\times p}\times\R^{n\times q}$. Then $\bz \in\ker{K} \Longleftrightarrow (\bx,\by)\in\ker (I-W_1) \times \ker (I-W_2) \Longleftrightarrow (\bx,\by)\in \mathcal{D}_{\R^{n}}\times \mathcal{D}_{\R^{n}}$.  \ref{it:K3} Finally, to compute the norm of $K$ it suffices to notice that this operator can be written in matrix form as 
\[
K = 
\begin{bmatrix}
	(I-W_1)^{1/2} & 0 \\
	0 & (I-W_2)^{1/2} 
\end{bmatrix}.
\]
Hence, its norm is given by its maximum eigenvalue, which is simply
\[
\|K\| = \max \bigl\{ \lambda_{\max}\bigl( (I-W_1)^{1/2}\bigr), \lambda_{\max} \bigl( (I-W_2)^{1/2} \bigr) \bigr\} = \sqrt{1 - \min\bigl\{\lambda_{\min}(W_1),\lambda_{\min}(W_2)\bigr\}},
\]
concluding the proof.
\end{proof}

By Proposition~\ref{prop:K}\ref{it:K2}, the linear operator $K$ enforces consensus on both variables $\bx$ and $\by$. In addition, Proposition~\ref{prop:K}\ref{it:K1} allows us to obtain an iteration equivalent to~\eqref{eq:dist_it} that requires only a single communication round.  
 Indeed, by defining the new variable $\bv^k=K\bu^k$, premultiplying the first equation in~\eqref{eq:dist_it} by $K$, and performing the appropriate change of variables, we get the equivalent iteration
\begin{equation}\label{eq:final_dist_it}
	\left\{
	\begin{aligned}
		\bv^{k+1} &=  \bv^k + \tau_{k-1}(I-W)\bz^k, \\
		 \bw^{k+1} &=   \bz^k - \tau_k \bv^{k+1} - \tau_{k-1}^2(I-W)\bz^k - \tau_k B(\bz^k) - \tau_{k-1} \bigl(B(\bz^k)-B(\bz^{k-1})\bigr), \\
		\bz^{k+1} &=   J_{\tau_k A}(\bw^{k+1}),
	\end{aligned}
	\right.
\end{equation}
where by the definition of the sequence $(\bv^k)$ it is required that $\bv^0\in\range K$. The latter trivially holds if $\bv^0=0$.
Consequently, \eqref{eq:final_dist_it} describes a distributed update where only a single communication round is performed per iteration to transmit the information encoded by the vector $\bz^k$.

The last step to complete our distributed algorithm for Problem~\ref{problem:1} is to tailor the linesearch in Step~2 of Algorithm~\ref{alg:pdls} to agent-wise operations. To this end, we follow the linesearch proposed in~\cite[Section~4.2]{atenas2024distributed} and devise a linesearch procedure where the $n$ agents communicate to conduct a global scalar sum. The distributed algorithm resulting from combining iteration~\eqref{eq:final_dist_it} with our proposed linesearch procedure is presented in Algorithm~\ref{alg:dist}. For conciseness, we have included the auxiliary sequences $(\bs_x^k)$ and $(\bs_y^k)$. Clearly, these variables agree with our distributed implementation, as each agent locally computes their corresponding component in these vectors.

\begin{algorithm}
	\caption{Distributed proximal-reflected-gradient  algorithm with  linesearch (DPRGL) for Problem~\ref{problem:1}}\label{alg:dist}
	
	Choose mixing matrices $W_1,W_2\in\R^{n\times n}$\;
	Choose  parameters $\delta\in{(0,1)}$, $\beta\in{(0,1)}$ and $\alpha\in{(0,1)}$\;
	Choose a positive lower bound for the initial trial stepsizes {\small$\munderbar{\tau} < \delta \frac{1-\beta}{2\sqrt{1- \min\{\lambda_{\min}(W_1),\lambda_{\min}(W_2)\}}}$}\;
	Choose an initial trial stepsize $\tau_{-1} \in{\left(\munderbar{\tau}, \delta \frac{1-\beta}{2\sqrt{1- \min\{\lambda_{\min}(W_1),\lambda_{\min}(W_2)\}}}\right)}$\;
	Choose initial points $\bx^{-1}=\bx^{0}\in\dom f$, $\by^{-1}=\by^0\in\dom g$, $\bv^{0}_x=0$, $\bv^{0}_y=0$ and  set
	\[ 
	\bs^{-1}_x :=   \nabla_x \phi(\bx^{-1},\by^{-1}),
	\quad
	\bs_y^{-1} :=  -\nabla_y \phi(\bx^{-1}, \by^{-1});
	\]
	\For{$k=0,1,\ldots$}
	{
		
		$\rhd$ \textbf{Step 1} (Dual update)
		\For{$i=1,2,\ldots,n$}
		{
			\[
			\left\{\begin{aligned}
				v_{x,i}^{k+1} = v_{x,i}^k + \tau_{k-1}\bigl(x_i^k - \sum_{j=1}^n W_{1,ij}x_j^k\bigr), \\
				v_{y,i}^{k+1} = v_{y,i}^k + \tau_{k-1}\bigl(y_i^k - \sum_{j=1}^n W_{2,ij}y_j^k\bigr).
			\end{aligned}\right.
			\]
		}
		
		$\rhd$ \textbf{Step 2} (Linesearch and primal update). Take {\scriptsize$\tau_{k(0)} \in {\left(\munderbar{\tau}, \delta \frac{1-\beta}{2\sqrt{1- \min\{\lambda_{\min}(W_1),\lambda_{\min}(W_2)\}}}\right)}$} and set 
		\[
		\bs^{k}_x :=   \nabla_x \phi(\bx^{k},\by^{k}),
		\quad
		\bs_y^{k} :=  -\nabla_y \phi(\bx^{k}, \by^{k}).
		\]
		\For{$\ell=0,1,\ldots$}
		{
			
				$\rhd$ \textbf{Step 2.a} (Primal agent-wise operations) 
				\For{$i=1,2,\ldots,n$}
				{Compute
				\[
				\footnotesize
				\left\{
				\begin{aligned}
					x^{k(\ell)+1}_i & =  \prox_{\tau_{k(\ell)} f_i} \biggl(x^k_i - \tau_{k(\ell)} v_{x,i}^{k+1}   - \tau_{k-1}^2 \bigl(x^k_i - \sum_{j =1 }^n W_{1,i j} x^k_j\bigr)  - \tau_{k(\ell)} s^k_{x,i} - \tau_{k-1}(s^k_{x,i}-s^{k-1}_{x,i} ) \biggr), \\
					y^{k(\ell)+1}_i & =  \prox_{\tau_{k(\ell)} g_i} \biggl(y^k_i - \tau_{k(\ell)} v_{y,i}^{k+1}  - \tau_{k-1}^2 \bigl(y_i^k - \sum_{j=1}^n W_{2,ij}y_j^k \bigr) - \tau_{k(\ell)} s^k_{y,i} - \tau_{k-1}(s^k_{y,i}-s^{k-1}_{y,i} ) \biggr). \\
				\end{aligned}
				\right.
				\]
				and the auxiliary scalar quantities
				\begin{equation}\label{eq:aici}
					\left\{
					\begin{aligned}
						a_i  &:=  \tau_{k(\ell)}^2 \|\nabla_x \phi_i (x^{k(\ell)+1}_i,y^{k(\ell)+1}_i)- s^k_{x,i}\|^2 -  \frac{\beta}{4}  \|x^{k(\ell)+1}_i-x^{k}_i\|^2, \\
						c_i &:=    \tau_{k(\ell)}^2 \|-\nabla_y \phi_i (x^{k(\ell)+1}_i,y^{k(\ell)+1}_i)- s^k_{y,i}\|^2 -  \frac{\beta}{4}  \|y^{k(\ell)+1}_i-y^{k}_i\|^2.
					\end{aligned}
					\right.
				\end{equation}
				} 
				$\rhd$ \textbf{Step 2.b} (Check the linesearch condition). Compute the global sum $\sum_{i=1}^n (a_i+c_i)$, and  if 
				\begin{equation}\label{eq:back-cond-alg2}
				\sum_{i=1}^n (a_i+c_i) >0,
				\end{equation}
				set $\tau_{k(\ell+1)} = \alpha \tau_{k(\ell)}$ and go back to Step~2.a. Otherwise set $\tau_{k} =\tau_{k(\ell)}$, $\bx^{k+1}= \bx^{k(\ell)+1}$,  $\by^{k+1}= \by^{k(\ell)+1}$ and go back to Step 1.
		}
	}
\end{algorithm}

Specifically, the linesearch in Algorithm~\ref{alg:dist} is a backtracking-type procedure consisting of two stages. First, in Step~2.a, every agent computes locally its contribution to the global sum in the linesearch. Secondly, this global scalar sum is computed in Step~2.b. The following result formalises the connection between Algorithm~\ref{alg:pdls} and Algorithm~\ref{alg:dist}. In addition, it ensures that the linesearch in Algorithm~\ref{alg:dist} satisfies Assumption~\ref{ass:tauk}.

\begin{proposition}\label{prop:dist_ls}
	Suppose that the functions $f_i, g_i$ and $\phi_i$ satisfy the assumptions of Problem~\ref{problem:1}, for all $i\in\{1,2,\ldots,n\}$. Then the following assertions hold.
	\begin{enumerate}[label=(\roman*)]
		\item\label{item:algsr-1} The sequence $(\bz^k)=(\bx^k,\by^k)$ generated by Algorithm~\ref{alg:dist} coincides with that of Algorithm~\ref{alg:pdls} applied to~\eqref{eq:mm_as_mi} initialised with $\bu^0=0$ and under a backtracking linesearch.
		\item\label{item:algsr-3} Furthermore, the sequence $(\tau_k)$ generated by Algorithm~\ref{alg:dist} satisfies Assumption~\ref{ass:tauk}.
	\end{enumerate}
\end{proposition}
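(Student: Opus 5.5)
For part (i) I will argue by induction on $k$ that the iterates of Algorithm~\ref{alg:dist} coincide with those of iteration~\eqref{eq:final_dist_it}, and hence, through the change of variables $\bv^k = K\bu^k$, with those of~\eqref{eq:dist_it}, i.e.\ Algorithm~\ref{alg:pdls} applied to~\eqref{eq:mm_as_mi}. The base case is immediate: $\bu^0=0$ gives $\bv^0=K\bu^0=0$, and both methods start from the same $\bz^{-1}=\bz^0$. For the inductive step I will check three things.
\begin{enumerate}[label=(\alph*)]
\item Step~1 is the row-wise form of $\bv^{k+1}=\bv^k+\tau_{k-1}(I-W)\bz^k$.
\item Step~2.a is the row-wise form of $\bz^{k+1}=J_{\tau_k A}(\bw^{k+1})$. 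Here $J_{\tau A}=(\prox_{\tau f_1},\dots,\prox_{\tau f_n},\prox_{\tau g_1},\dots)$ by the separable structure of $A$, and $\bs^k=B(\bz^k)$.
\item The identity $\tau_{k-1}K^*(\bu^{k+1}-\bu^k)=\tau_{k-1}^2(I-W)\bz^k$ holds by Proposition~\ref{prop:K}\ref{it:K1}.
\end{enumerate}

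It then remains to match the linesearch rules. Squaring~\eqref{eq:linesearch} gives
\[
\tau^2\|B(\bz^{k+1})-B(\bz^k)\|^2 \leq \tfrac{\beta}{4}\|\bz^{k+1}-\bz^k\|^2 .
\]
Splitting the Frobenius norms over agents and over the $x$- and $y$-blocks, the difference of the two sides equals exactly $\sum_i(a_i+c_i)$. So~\eqref{eq:back-cond-alg2} fails precisely when~\eqref{eq:linesearch} holds, and Algorithm~\ref{alg:dist} performs the backtracking of Remark~\ref{remark:backtracking}: trial steps $\alpha^\ell\tau_{k(0)}$, accepting the first one satisfying~\eqref{eq:linesearch}. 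The constraint $\tau_k<\delta(1-\beta)/(2\|K\|)$ holds because of Proposition~\ref{prop:K}\ref{it:K3}. I will note that the initial trial step $\tau_{k(0)}$ may vary with $k$, but it always lies in $(\munderbar{\tau},\delta(1-\beta)/(2\|K\|))$. Finite termination of the inner loop follows from Lemma~\ref{lemma:lswd}.

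For part (ii) I will reproduce the argument of Remark~\ref{remark:backtracking}, now with the varying initial trial step. Assume $(\bz^k,\bu^k)$ is bounded. Since every trial $\tau\le\bar\tau:=\delta(1-\beta)/(2\|K\|)$, the estimates~\eqref{eq:remarkback-1}--\eqref{eq:remarkback-2} show that the set $Z$ of all trial points $\bz^{k+1}(\tau)$ is bounded. This uses that $\bz^{k+1}(0)$ is bounded, by~\eqref{eq:zk10} together with continuity of $B$ and boundedness of $\bu^k$. Pick a compact convex set $U\subseteq\overline{\dom}\,A$ containing all $\bz^k$ and $Z$. Then $B$ is $L$-Lipschitz on $U$, so every trial $\tau\le\sqrt\beta/(2L)$ is accepted. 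The backtracking therefore stops with $\tau_k\ge\min\{\munderbar{\tau},\alpha\sqrt\beta/(2L)\}>0$: either the first trial is accepted, giving $\tau_k\ge\munderbar{\tau}$, or the last rejected trial exceeded $\sqrt\beta/(2L)$. This lower bound is uniform in $k$, which is exactly Assumption~\ref{ass:tauk}.

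The main obstacle I expect is the bookkeeping in (i) under the change of variables. The $\bu$-sequence in Algorithm~\ref{alg:pdls} is not generated explicitly, so I must define $\bu^k:=\bu^0+\sum_{j<k}\tau_{j-1}K\bz^j$ and check that $K\bu^k=\bv^k$. I also need to make sure that the accepted $\bz^{k+1}$ depends only on $K^*\bu^{k+1}=\bv^{k+1}$ and on $\bz^k,\bz^{k-1}$, so that the two linesearches generate identical trial points.
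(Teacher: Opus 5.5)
Your proposal is correct and follows the paper's proof. Part~(i) goes by the induction $\bv^k=K\bu^k$, using $K^*=K$ and $KK=I-W$, together with the equivalence of~\eqref{eq:back-cond-alg2} with the squared form of~\eqref{eq:linesearch}; part~(ii) is the bounded-trial-points argument of Remark~\ref{remark:backtracking}. Your lower bound $\min\{\munderbar{\tau},\alpha\sqrt{\beta}/(2L)\}$ is actually the more careful one when the initial trial step $\tau_{k(0)}$ varies with $k$: the paper's stated constant $\alpha^{\ell}\munderbar{\tau}$ is only guaranteed when $\tau_{k(0)}=\munderbar{\tau}$, and in general it should be weakened by one factor of $\alpha$, though this does not affect the conclusion.
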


\begin{proof}
	\ref{item:algsr-1} Let us start by formalising the iterative scheme of Algorithm~\ref{alg:pdls} under the conditions in~\ref{item:algsr-1}. A general iteration of Algorithm~\ref{alg:pdls} with a backtracking linesearch applied to~\eqref{eq:mm_as_mi} is written in compact form as~\eqref{eq:dist_it} where the stepsize $\tau_k$ is given by $\tau_{k(\ell)} := \alpha^{\ell} \tau_{k(0)}$ with $\tau_{k(0)}\in{\bigl(0,\delta (1-\beta)/(2\sqrt{1- \min\{\lambda_{\min}(W_1),\lambda_{\min}(W_2)\}})\bigr)}$ (since $\|K\|=\sqrt{1- \min\{\lambda_{\min}(W_1),\lambda_{\min}(W_2)\}}$ by Proposition~\ref{prop:K}\ref{it:K3}) and where $\ell$ is the smallest integer such that 
	\begin{equation}\label{eq:ls-alg1-mi}
	\tau_{k(\ell)} \|B(\bz^{k(\ell)+1} ) - B(\bz^k)\| \leq \frac{\sqrt{\beta}}{2}\| \bz^{k(\ell)+1} - \bz^k\|,
	\end{equation}
	where we use $k(\ell)+1$ on the superscript of $\bz^{k(\ell)+1}$ to emphasize the dependence of this vector on the stepsize.
	Recall that the iterations conducted by Algorithm~\ref{alg:dist} are simply the agent-wise counterpart of~\eqref{eq:final_dist_it}. Hence, to prove the desired statement, two things are needed. On the one hand, to show that, under an equivalent initialisation, the schemes \eqref{eq:dist_it} and~\eqref{eq:final_dist_it} generate the same  sequence~$(\bz^k)$. Further, that the condition~\eqref{eq:back-cond-alg2} used for the backtracking linesearch in Algorithm~\ref{alg:dist} is equivalent to~\eqref{eq:ls-alg1-mi}. We start with the latter. Indeed, by recalling the definition of $a_i$ and $c_i$ in~\eqref{eq:aici} we have
	\[
	\begin{aligned}
		\sum_{i=1}^n (a_i+& c_i) \leq 0 \\
		&   \Longleftrightarrow 
		\sum_{i=1}^n \left( \tau_{k(\ell)}^2  \|\nabla_x \phi_i (x^{k(\ell)+1}_i,y^{k(\ell)+1}_i)- s^k_{x,i}\|^2 +  \tau_{k(\ell)}^2 \|-\nabla_y \phi_i (x^{k(\ell)+1}_i,y^{k(\ell)+1}_i)- s^k_{y,i}\|^2 \right) \\
		& \quad  \quad \leq  \sum_{i=1}^n \left( \frac{\beta}{4}  \|x^{k(\ell)+1}_i-x^{k}_i\|^2 +  \frac{\beta}{4}  \|y^{k(\ell)+1}_i-y^{k}_i\|^2 \right) \\
		&  \Longleftrightarrow 
		\tau_{k(\ell)} \|B(\bz^{k(\ell)+1} ) - B(\bz^k)\| \leq \frac{\sqrt{\beta}}{2}\| \bz^{k(\ell)+1} - \bz^k\|,
	\end{aligned}
	\]
	as desired.
	
	Now, consider the iterations~\eqref{eq:dist_it} and~\eqref{eq:final_dist_it} initialised with the same choice of $\bz^{-1}$ and $\bz^0$ and with $\bu^0=0$ for~\eqref{eq:dist_it} and $\bv^0=0$ for~\eqref{eq:final_dist_it}. It trivially holds that $\bz^0$ coincides for the two iterations and that $\bv^0 = K \bu^0$. By induction, assume that there exists some $\hat{k}\in\N$ such that these relationships hold for all $k\leq \hat{k}$. We next show that they are preserved for $\hat{k}+1$. 
	
	Multiplying the first equation of~\eqref{eq:dist_it} by $K$ and using the identity $KK = I - W$ from Proposition~\ref{prop:K}\ref{it:K1}, we obtain
	\begin{equation*}
		K\bu^{\hat{k}+1} = K\bu^{\hat{k}} + \tau_{\hat{k}-1}KK\bz^{\hat{k}} = \bv^{\hat{k}} + \tau_{\hat{k}-1}(I-W)\bz^{\hat{k}}.
	\end{equation*}
	This exactly matches the update for $\bv^{\hat{k}+1}$ in~\eqref{eq:final_dist_it}, establishing that $\bv^{\hat{k}+1} = K\bu^{\hat{k}+1}$. Next, we substitute this relation into the primal update. Using $K^* = K$, we have $K^*\bu^{\hat{k}+1} = \bv^{\hat{k}+1}$. Furthermore, the difference term in the second equation of~\eqref{eq:dist_it} becomes
	\begin{equation*}
		K^*(\bu^{\hat{k}+1} - \bu^{\hat{k}}) = \bv^{\hat{k}+1} - \bv^{\hat{k}} = \tau_{\hat{k}-1}(I-W)\bz^{\hat{k}},
	\end{equation*}
	where the last equality is due to the first equation in~\eqref{eq:final_dist_it}. 
	Substituting these into the second equation of~\eqref{eq:dist_it} yields
	\begin{equation*}
		\bw^{\hat{k}+1} = \bz^{\hat{k}} - \tau_{\hat{k}} \bv^{\hat{k}+1} - \tau_{\hat{k}-1}^2(I - W)\bz^{\hat{k}} - \tau_{\hat{k}} B(\bz^{\hat{k}}) - \tau_{\hat{k}-1} \bigl(B(\bz^{\hat{k}})-B(\bz^{\hat{k}-1})\bigr),
	\end{equation*}
	which is exactly the update for $\bw^{\hat{k}+1}$ in~\eqref{eq:final_dist_it}. This finally proves that the sequence $(\bz^k)$ generated by the iterative scheme~\eqref{eq:final_dist_it} is identical to the one generated by~\eqref{eq:dist_it}.

	\ref{item:algsr-3} Furthermore, since we showed above that the linesearch in Algorithm~\ref{alg:dist} is given by a backtracking mechanism, by tailoring the argument in Remark~\ref{remark:backtracking} to this setting, we conclude that the sequence of stepsizes $(\tau_k)$ in Algorithm~\ref{alg:dist} is bounded away from 0 by $\alpha^{\ell}\munderbar{\tau}$, where $\munderbar{\tau}$ is the lower bound imposed on the initial trial stepsizes in Algorithm~\ref{alg:dist}  and $\ell$ is the smallest integer such that $\alpha^{\ell}\munderbar{\tau}\leq\sqrt{\beta}/(2L)$,  with $L$ being the analogous to the Lipschitz  constant considered in Remark~\ref{remark:backtracking}. This shows that Assumption~\ref{ass:tauk} holds.	  
\end{proof}

We finally state our main result on the convergence of Algorithm~\ref{alg:dist} to solutions of Problem~\ref{problem:1}.

\begin{theorem}
	Suppose that Problem~\ref{problem:1} has at least a solution and that the subdifferential sum rules in~\eqref{eq:sumrule} hold. Then the sequence $(\bx^k,\by^k)$ generated by Algorithm~\ref{alg:dist} converges to a point $(\bx^*,\by^*)\in\mathcal{D}_{\R^{n\times p}}\times \mathcal{D}_{\R^{n\times q}}$ such that any pair of rows $(x^*,y^*)$ of $(\bx^*,\by^*)$ solve Problem~\ref{problem:1}. 
    In addition, for the sequences $(\widehat{\bx}^N)$ and $(\widehat{\by}^N)$ defined as in~\eqref{eq:ergodic-squences}, it holds that $(\widehat{\bx}^N,\widehat{\by}^N)\to(\bx^*,\by^*)$ and
    \begin{equation*}
    \quad\left| \Phi(\widehat{\bx}^N,\by^*) - \Phi(\bx^*,\widehat{\by}^N)\right| = \mathcal{O}\left( \frac{1}{N}\right).
\end{equation*}
\end{theorem}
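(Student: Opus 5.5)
The theorem follows by chaining earlier results: reduce Algorithm~\ref{alg:dist} to Algorithm~\ref{alg:pdls}, invoke the convergence theory, and translate back to saddle points.

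\textbf{Step 1 (reduction).} By Proposition~\ref{prop:dist_ls}\ref{item:algsr-1}, the sequence $(\bx^k,\by^k)$ produced by Algorithm~\ref{alg:dist} coincides with the primal sequence $(\bz^k)$ of Algorithm~\ref{alg:pdls}. Here Algorithm~\ref{alg:pdls} is applied to~\eqref{eq:mm_as_mi} with $A,B$ as in~\eqref{eq:opreform}, $C=N_{\{0\}}$, $K=((I-W_1)^{1/2},(I-W_2)^{1/2})$, $\bu^0=0$, and a backtracking linesearch. By Proposition~\ref{prop:K}\ref{it:K2}, $K$ enforces consensus, so the construction of Section~\ref{sect:rmi} applies with this $K$.

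\textbf{Step 2 (hypotheses and convergence).} Problem~\ref{problem:1} has a saddle point $(x^*,y^*)$ and the sum rules~\eqref{eq:sumrule} hold. Proposition~\ref{prop:sp-mi} then shows that the stacked point $(\mathbbm{1}_n x^{*\top},\mathbbm{1}_n y^{*\top})$ solves~\eqref{eq:mm_as_mi}, so the inclusion~\eqref{eq:monotone_inclusion} is solvable. The assumptions of Problem~\ref{problem:mi} hold by the discussion after~\eqref{eq:opreform}. Assumption~\ref{ass:tauk} holds by Proposition~\ref{prop:dist_ls}\ref{item:algsr-3}. Theorem~\ref{th:conv} therefore gives $(\bz^k,\bu^k)\to(\bz^*,\bu^*)$, a primal-dual solution. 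In particular, $\bz^*=(\bx^*,\by^*)$ solves~\eqref{eq:mm_as_mi}. Applying Proposition~\ref{prop:sp-mi} in the reverse direction, $\bx^*$ and $\by^*$ have identical rows $x^*,y^*$ and $(x^*,y^*)$ is a saddle point of~\eqref{eq:min-max}. Any pair of rows is this same pair, so the first claim follows.

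\textbf{Step 3 (ergodic statements).} Theorem~\ref{th:first-ergodic}\ref{it:t-first-erg-2} applies under exactly the assumptions verified in Step~2. It gives $(\hbx^N,\hby^N)\to\bz^*$, with $\bz^*$ the same limit as in Theorem~\ref{th:conv}. For the rate, apply Theorem~\ref{th:first-ergodic}\ref{it:t-first-erg-3} with the primal solution $\bar{\bz}:=\bz^*\in\mathcal{D}_{\R^{n\times p}}\times\mathcal{D}_{\R^{n\times q}}$. This yields $|\Phi(\hbx^N,\by^*)-\Phi(\bx^*,\hby^N)|=\mathcal{O}(1/N)$.

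\textbf{Main obstacle.} The argument is mostly bookkeeping, but one point needs care. Theorem~\ref{th:first-ergodic}\ref{it:t-first-erg-3} requires a primal solution that is paired with some dual $\bar{\bu}$ so that Proposition~\ref{prop:cGnonnegative} applies. Here we use $\bar{\bu}:=\bu^*$, the dual limit from Theorem~\ref{th:conv}. We also need $\bz^*\in\dom h$, which holds because $\bz^*$ solves the inclusion. With both points checked, the chain is complete.
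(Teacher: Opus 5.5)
Your proposal is correct and follows essentially the same route as the paper's proof. You reduce Algorithm~\ref{alg:dist} to Algorithm~\ref{alg:pdls} via Proposition~\ref{prop:dist_ls}, use Proposition~\ref{prop:sp-mi} both for solvability and to translate the limit back to a saddle point, invoke Theorem~\ref{th:conv}, and obtain the ergodic claims from Theorem~\ref{th:first-ergodic}. The extra checks you make, pairing $\bz^*$ with the dual limit $\bu^*$ and verifying $\bz^*\in\dom h$, are left implicit in the paper, and you handle them correctly.
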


\begin{proof}
	By Proposition~\ref{prop:sp-mi}, the monotone inclusion~\eqref{eq:mm_as_mi} has at least a solution. Then, by Proposition~\ref{prop:dist_ls} and Theorem~\ref{th:conv}, the sequence $(\bx^k,\by^k)$ converges to a point $(\bx^*,\by^*)\in\mathcal{D}_{\R^{n\times p}}\times \mathcal{D}_{\R^{n\times q}}$ solving~\eqref{eq:mm_as_mi}. Finally, by again resorting to Proposition~\ref{prop:sp-mi} the conclusion follows. 
    The last assertion is a direct consequence of Proposition~\ref{prop:dist_ls} and Theorem~\ref{th:first-ergodic}.
\end{proof}

\section{Concluding remarks}\label{sect:conclusion}
	In this work, we proposed a distributed proximal-reflected-gradient method with linesearch (DPRGL) to solve convex-concave min-max problems in the form of Problem~\ref{problem:1}. The key novelty of DPRGL is its ability to handle saddle functions with locally Lipschitz continuous partial gradients without requiring prior knowledge of the local Lipschitz moduli, thereby generalising the framework in~\cite{malitsky2023first}. Our analysis relies on a novel primal-dual forward-reflected splitting algorithm with linesearch for composite monotone inclusion problems involving single-valued, locally Lipschitz continuous operators. A practical limitation of the proposed scheme is that the distributed linesearch implementation relies on a global scalar reduction across all agents (similar to the routines in~\cite{atenas2024distributed}). A promising focus for future research is to design stepsize selection mechanisms that achieve fully decentralised execution without requiring global coordination. This issue has been addressed in different settings by some recent works but under the more restrictive condition of global Lipschitz continuity of the gradients; see, e.g.,~\cite{tam2025decentralisedforwardbackwardtypealgorithmnetworkindependent,MR3999773}.

\section*{Statements and Declarations}

\paragraph{Funding}FA was partially supported by Centro de Modelamiento Matem\'{a}tico (CMM), ACE210010 and FB210005, BASAL funds for center of excellence from ANID-Chile. The research of MND and MKT was supported in part by Australian Research Council grant DP230101749. DTB was partially supported by grant PID2025-168246NB-I00 funded  by
 MICIU/AEI/10.13039/501100011033 and ERDF/EU, as well as by grant CIGE/2026/80 by Conselleria d'Educaci\'o, Cultura i Universitats, Generalitat Valenciana. 

\paragraph{Competing interests} The authors declare no competing interests.

\bibliographystyle{abbrv}
\bibliography{biblio}

\end{document}